\newtheorem{theorem}{Theorem}[section]
\newtheorem{proposition}[theorem]{Proposition}
\newtheorem{example}[theorem]{Example}
\newcommand{\inv}[0]{^{-1}}
\newcommand{\mc}[0]{\mathcal}
\newcommand{\dual}[0]{^*}
\newcommand{\bb}[0]{\mathbb}
\newcommand{\bs}[0]{\backslash}
\newcommand{\vp}[0]{\varphi}
\newcommand{\exterior}[0]{{\rm Exterior}}
\newcommand{\interior}[0]{{\rm Interior}}
\newcommand{\lattice}[0]{{\rm Lattice}}
\def\blfootnote{\xdef\@thefnmark{}\@footnotetext}
\title{Cobiased graphs: Single-element extensions and elementary quotients of graphic matroids}
\author{{Daniel Slilaty}
       \thanks{E-mail address: {\tt daniel.slilaty@wright.edu}}\\
   {\small Department of Mathematics and Statistics} \\
   {\small Wright State University}\\
   {\small Dayton, OH, USA}\\
\and {Thomas Zaslavsky}
    \thanks{E-mail address: {\tt zaslav@math.binghamton.edu}}\\
   {\small Department of Mathematics and Statistics} \\
   {\small Binghamton University}\\
   {\small Binghamton, NY, USA}\\}
\begin{document}

\maketitle

{}\blfootnote{Mathematic Subject Classification 2020: Primary 05B35; Secondary 05C22.}

\begin{abstract}
Zaslavsky (1991) introduced a graphical structure called a biased graph and used it to characterize all single-element coextensions and elementary lifts of graphic matroids. We introduce a new, dual graphical structure that we call a cobiased graph and use it to characterize single-element extensions and elementary quotients of graphic matroids.
\end{abstract}

\tableofcontents


\section{Introduction}

An \emph{elementary lift} of a matroid $M$ is a matroid of the form $N\bs e_0$ in which $N/e_0=M$ and $e_0$ is neither a loop nor coloop of $N$. Single-element coextensions and elementary lifts of graphic matroids were characterized in terms of graphic structures by Zaslavsky \cite{Zaslavsky:BiasedGraphs1, Zaslavsky:BiasedGraphs2}. Aside from the work in \cite{Zaslavsky:BiasedGraphs1, Zaslavsky:BiasedGraphs2}, single-element coextensions and elementary lifts of graphic matroids have been objects of consistent interest in matroid theory and related fields. Guenin's investigation \cite{Guenin:IntegralPolyhedra} into integral polyhedra related to binary elementary lifts of graphic matroids is notable.

An \emph{elementary quotient} of a matroid $M$ is a matroid of the form $N/e_0$ in which $N\bs e_0=M$ and $e_0$ is neither a loop nor coloop of $N$. Elementary quotients of graphic matroids have also been of consistent interest, in particular binary elementary quotients. Guenin's result in \cite{Guenin:IntegralPolyhedra} applies not only to binary elementary lifts of graphic matroids but also binary elementary quotients. Guenin, Pivotto, and Wollan \cite{GueninPivottoWollan:Relationships} explored the relationships between binary elementary lifts and quotients of graphic matroids. Seymour's original proof of the decomposition theorem for regular matroids \cite{Seymour:Decompositions} uses binary single-element extensions and elementary quotients of graphic matroids. In the field of error-correcting codes, Hakimi and Bredeson \cite{HakimiBredeson} constructed binary codes using circuit spaces of binary single-element and multiple-element extensions and quotients of graphic matroids. Jungnickel and Vanstone \cite{JungnickelVanstone:QAry} do the same with $q$-ary codes.

Despite all of the interest in single-element extensions and elementary quotients of graphic matroids, there has been no general description of them. Recski characterized connected single-element extensions and elementary quotients of graphic matroids \cite{Recski1,Recski2} that are representable over a given field and made some generalizations. In this paper we will fully characterize all single-element extensions and elementary quotients of graphic matroids in terms of graphical structures.

\section{Cobiased Graphs}

Let $G$ be a graph and $X\subset V(G)$ or $X\subset G$. The \emph{coboundary} of $X$ is denoted by $\delta(X)$ and is the set of links (i.e., non-loop edges) of $G$ with exactly one endpoint in $X$. Consider a tripartition $\{X_1,X_2,X_3\}$ of the vertices of one connected component of $G$ into nonempty subsets such that each induced subgraph $G[X_i]$ is connected and there is at least one edge of $G$ connecting each pair of these three subgraphs. The union of the three bonds (i.e., minimal edge cuts) $B_1=\delta(X_1)$, $B_2=\delta(X_2)$, and $B_3=\delta(X_3)$ is called a \emph{tribond} (see Figure \ref{F:TriangularTriple}).

\begin{figure}[H]
\begin{center}
\includegraphics[page=1,scale=0.7]{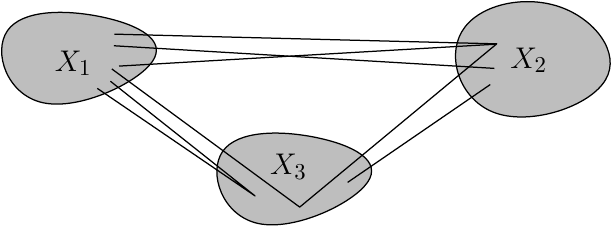}
\end{center}
\caption{A tribond.}\label{F:TriangularTriple}
\end{figure}

A \emph{linear class} of bonds of $G$ is a subset $\mc L$ of the set of all bonds in $G$ satisfying the property that every tribond contains zero, one, or three bonds from $\mc L$; that is, a tribond cannot contain exactly two bonds from $\mc L$. We call the pair $(G,\mc L)$ in which $G$ is a graph and $\mc L$ a linear class of bonds a \emph{cobiased graph}. Bonds in $\mc L$ are called \emph{cobalanced} and bonds not in $\mc L$ are called \emph{un-cobalanced}.
(The name comes from the fact that cobalance of bonds is dual to balance of cycles in \cite{Zaslavsky:BiasedGraphs1} \emph{et seq}.)
A linear class of bonds $\mc L$ is \emph{trivial} when all bonds are cobalanced; that is, $\mc L$ is the set of all bonds of $G$.

\section{Join and Complete Join Matroids of Cobiased Graphs}

In this section we will describe two matroids associated with a cobiased graph $(G,\mc L)$. These matroids are denoted by $J_0(G,\mc L)$ and $J(G,\mc L)$ and are called respectively the \emph{complete join} and \emph{join} matroids of $(G,\mc L)$. The term ``join" is used to echo the considerable amount of literature on $T$-joins of graphs that are dependent sets of binary elementary quotients of graphic matroids. The matroids $J_0(G,\mc L)$ and $J(G,\mc L)$ are defined in Section \ref{S:Cocircuits} in terms of their cocircuits using Crapo's Theorem \cite[p.\ 62]{Crapo:Extensions} on single-element extensions of matroids. Crapo's Theorem also immediately implies that $J_0(G,\mc L)$ and $J(G,\mc L)$ characterize respectively single-element extensions and elementary quotients of graphic matroids. From there we will determine the bases, independent sets, rank functions, and circuits of these two matroids. We will also provide a graphical description of deletions and contractions.

\subsection{Cocircuits} \label{S:Cocircuits}

A pair of bonds $B_1,B_2$ in a graph $G$ is a \emph{modular pair} when the number of connected components of $G-(B_1\cup B_2)$ is two more than the number of connected components of $G$. Thus $B_1,B_2$ form a modular pair of bonds when: $B_1\cup B_2$ forms a tribond, $B_1\cup B_2$ form the configuration in Figure \ref{F:TriangularTriple} but with no edges between $X_1$ and $X_2$ (see Figure \ref{F:ModularBonds}, left), or $B_1$ and $B_2$ are in two distinct connected components of $G$ (see Figure \ref{F:ModularBonds}, right). When $B_1,B_2$ is a modular pair of bonds but $B_1\cup B_2$ is not a tribond (i.e., $B_1\cup B_2$ is one of the structures from Figure \ref{F:ModularBonds}) we will call $B_1\cup B_2$ a \emph{dibond}. Note that a modular pair of bonds share an edge if and only if their union is a tribond.

\begin{figure}[H]
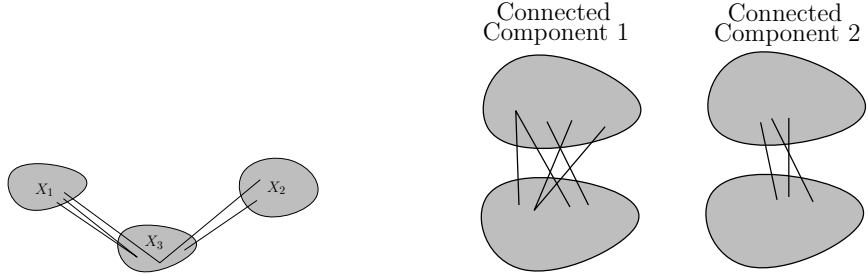

\begin{center}
\includegraphics[page=2,scale=0.4]{QuotientsFigures}\hspace{2cm}
\includegraphics[page=3,scale=0.8]{QuotientsFigures}
\end{center}
\caption{Every dibond is of exactly one of two possible types.}\label{F:ModularBonds}
\end{figure}

The matroid of Theorem \ref{T:Cocircuits} is $J_0(G,\mc L)$, the complete join matroid of $(G,\mc L)$. If $\mc L$ is trivial, then the single-element extension associated with $\mc L$ is just $M(G)$ along with a new element that is either a loop or coloop.

\begin{theorem} \label{T:Cocircuits}
If $\mc L$ is a non-trivial linear class of bonds of $G$, then there is a matroid with element set $E(G)\cup e_0$ in which $e_0$ is neither a loop nor a coloop and whose cocircuits consist of the following:
\begin{itemize}
\item[(1)] bonds in $\mc L$,
\item[(2)] sets of the form $B\cup e_0$ in which $B$ is a bond not in $\mc L$, and
\item[(3)] tribonds and dibonds which do not contain a bond from $\mc L$.
\end{itemize}
Conversely, if $N$ is a single-element extension of the graphic matroid $M(G)$ with new element $e_0$ which is neither a loop nor a coloop of $N$, then there is a non-trivial linear class of bonds $\mc L$ of $G$ such that the cocircuits of $N$ consist of the sets above.
\end{theorem}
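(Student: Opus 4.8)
The plan is to prove both directions via Crapo's Theorem on single-element extensions of matroids. Recall that Crapo's Theorem says that single-element extensions of a matroid $M$ (where the new element $e_0$ is neither a loop nor coloop) correspond bijectively to \emph{linear subclasses} of the cocircuits of $M$: a collection $\mc C'$ of cocircuits of $M$ that has the property that whenever $C_1, C_2$ is a modular pair of distinct cocircuits both lying in $\mc C'$, then every cocircuit of $M$ contained in $C_1 \cup C_2$ also lies in $\mc C'$. Given such a linear subclass, the extension $N$ has cocircuits: (a) the members of $\mc C'$ themselves, (b) the sets $C \cup e_0$ for cocircuits $C \notin \mc C'$, and (c) the minimal members of $\{C_1 \cup C_2 : C_1, C_2 \text{ a modular pair, not both in } \mc C'\}$. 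So the whole theorem reduces to the statement that, for $M = M(G)$, a non-trivial linear class of bonds $\mc L$ (in the cobiased-graph sense) is exactly the same thing as a non-trivial linear subclass of cocircuits of $M(G)$, and that the three types of cocircuits listed match up with (a), (b), (c).

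First I would pin down the translation between matroid and graph language. The cocircuits of $M(G)$ are exactly the bonds of $G$. The modular pairs of cocircuits of $M(G)$ are exactly the modular pairs of bonds as defined in Section~\ref{S:Cocircuits} (I would verify that the matroid notion of modular pair of cocircuits---that the rank of the complement of $C_1 \cup C_2$ drops by exactly $2$---translates to the stated condition that $G - (B_1 \cup B_2)$ has two more components than $G$; this is a standard computation using $r_{M(G)}(S) = |V(G)| - c(G[S])$ applied to the complementary edge set, together with the fact that deleting a bond increases the component count by exactly one). Then I would invoke the already-established dichotomy: a modular pair $B_1, B_2$ of bonds either shares an edge, in which case $B_1 \cup B_2$ is a tribond containing exactly the three bonds $B_1, B_2, B_3$, or shares no edge, in which case $B_1 \cup B_2$ is a dibond containing exactly the two bonds $B_1, B_2$. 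Here it is important to check that a tribond $B_1 \cup B_2$ really contains \emph{no} other bonds besides $B_1, B_2, B_3$, and that a dibond contains no bonds besides $B_1, B_2$ --- these are the minimality facts that make the linear-subclass closure condition reduce precisely to the ``zero, one, or three'' tribond condition (a dibond never forces anything because the only bonds inside it are $B_1, B_2$ themselves).

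With that dictionary in hand, the forward direction is immediate: given a non-trivial linear class $\mc L$, the condition ``no tribond contains exactly two members of $\mc L$'' says precisely that for a modular pair $B_1, B_2 \in \mc L$ sharing an edge, the third bond $B_3$ of the tribond also lies in $\mc L$; and for a modular pair $B_1, B_2 \in \mc L$ not sharing an edge, there is no third bond to worry about. Hence $\mc L$ is a linear subclass of cocircuits of $M(G)$, Crapo's Theorem produces a single-element extension $N$, and translating (a)--(c) back into graph language gives exactly lists (1), (2), (3): the members of $\mc L$ are the cobalanced bonds (1); the sets $B \cup e_0$ for $B \notin \mc L$ are (2); and the minimal sets $B_1 \cup B_2$ over modular pairs not both in $\mc L$ are exactly the tribonds and dibonds containing no bond of $\mc L$ --- here I need the observation that if $B_1 \cup B_2$ is a tribond and at least one of its three bonds, say $B_3$, lies in $\mc L$, then $B_1 \cup B_2 = B_1 \cup B_3 \cup B_2 \cup B_3$ is not minimal among such unions (some sub-union omitting the $\mc L$-bond is smaller, or rather: the tribond is not minimal because one can produce the same set or a subset as a union of a different modular pair), and conversely a tribond or dibond with no bond of $\mc L$ is genuinely minimal. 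That $e_0$ is neither a loop nor coloop of $N$ is exactly Crapo's non-triviality hypothesis, which holds because $\mc L$ is neither empty-of-obstruction in a degenerate way nor all bonds; more precisely $e_0$ is a coloop iff $\mc L$ is the set of all cocircuits (trivial $\mc L$) and a loop iff $\mc L = \emptyset$ is forced, so non-triviality of $\mc L$ plus $\mc L \neq \emptyset$ (which we may assume, handling $\mc L=\emptyset$ as a degenerate case or noting it is covered) is what we need. For the converse, given a single-element extension $N$ of $M(G)$ with $e_0$ not a loop or coloop, Crapo's Theorem hands us a non-trivial linear subclass $\mc C'$ of cocircuits of $M(G)$; set $\mc L = \mc C'$, viewed as a set of bonds. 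The linear-subclass condition, run through the same dictionary, says no tribond contains exactly two bonds of $\mc L$, so $\mc L$ is a linear class of bonds; and $\mc L$ is non-trivial since $\mc C' \neq$ all cocircuits (as $e_0$ is not a coloop). Reading off the cocircuits of $N$ from Crapo's description and translating gives lists (1)--(3), completing the argument.

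The main obstacle I anticipate is the bookkeeping in step (c) of Crapo's description --- namely showing that the \emph{minimal} sets $B_1 \cup B_2$ over modular pairs not both in $\mc L$ are exactly the tribonds and dibonds containing no bond of $\mc L$, and conversely that every such tribond/dibond arises this way and is genuinely minimal. The subtlety is that a tribond $B_1 \cup B_2 \cup B_3$ can be written as a union of \emph{three} different modular pairs ($B_1 \cup B_2$, $B_1 \cup B_3$, $B_2 \cup B_3$, all giving the same edge set), so one must argue carefully about when such a union fails to be minimal: if some $B_i \in \mc L$ then the pair $\{B_j, B_k\}$ with $j,k \neq i$ is the relevant ``not both in $\mc L$'' pair only if not both $B_j, B_k \in \mc L$ --- and if exactly one of the three is in $\mc L$, one still has to rule the tribond out, which should follow because then $B_1 \cup B_2 \cup B_3$ properly contains, e.g., a bond of type (1) or is dominated in the cocircuit ordering of $N$. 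Getting the loop/coloop edge cases and the handling of $\mc L = \emptyset$ (is it non-trivial? does it give a loop?) stated cleanly is the other fiddly point; everything else is a routine dictionary translation once the modular-pair correspondence is nailed down.
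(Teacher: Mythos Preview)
Your approach is exactly the paper's: invoke the cocircuit form of Crapo's Theorem and verify that the graph-theoretic notion of a modular pair of bonds coincides with the matroid-theoretic one in $M(G)$. The paper's proof is in fact much terser than your outline---it only spells out the modular-pair translation (via the chain $r(H_1)+r(H_2)=r(H_1\cup H_2)+r(H_1\cap H_2)\iff r(H_1\cap H_2)=r(M(G))-2\iff$ the flat $H_1\cap H_2$ has two more components than $G$) and leaves the matching of (1)--(3) with Crapo's output entirely implicit.

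Your instinct that item (c) is the delicate point is right, but your formulation of Crapo's (c) as ``minimal $C_1\cup C_2$ over modular pairs not both in $\mc L$'' is slightly off and is what is causing you the bookkeeping headache: a tribond with exactly one bond $B_3\in\mc L$ would be minimal in that family (every modular pair inside it unions to the whole tribond) yet is not a cocircuit of $N$. The clean statement of (c) is that the extra cocircuits are the complements $E-F$ of colines $F$ such that \emph{every} hyperplane above $F$ lies outside the modular cut---equivalently, unions $C_1\cup C_2$ of modular pairs in which \emph{every} cocircuit contained in $C_1\cup C_2$ is outside $\mc L$. In graph language this is immediately ``tribonds and dibonds containing no bond of $\mc L$,'' with no separate minimality argument needed. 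Once (c) is stated this way, your dictionary translation goes through without the patching you anticipated.
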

\begin{proof}
This follows the cocircuit version of Crapo's Theorem \cite[p.62]{Crapo:Extensions} on single-element extensions of matroids as long as modularity of bonds as we have defined them graphically is exactly how modular pairs of cocircuits behave in the graphic matroid $M(G)$. This will complete our proof.

Let $B_1$ and $B_2$ be distinct bonds in $G$ and let $H_1=E-B_1$ and $H_2=E-B_2$. Now $B_1,B_2$ is a modular pair of cocircuits $M(G)$ if and only if $H_1,H_2$ is a modular pair of hyperplanes in $M(G)$, if and only if $r(H_1)+r(H_2)=r(H_1\cup H_2)+r(H_1\cap H_2)$, if and only if $r(H_1\cap H_2)=r(M(G))-2$, if and only if the flat $H_1\cap H_2$ has 2 more connected components than does $G$, which is how we defined a modular pair of bonds.
\end{proof}

The \emph{join matroid} of a cobiased graph $(G,\mc L)$ is defined as $J(G,\mc L)=J_0(G,\mc L)/e_0$.  If $\mc L$ is trivial, then $J(G,\mc L)=M(G)$. Theorem \ref{T:Cocircuits2} is an immediate corollary of Theorem \ref{T:Cocircuits}.

\begin{theorem} \label{T:Cocircuits2}
If $\mc L$ is a non-trivial linear class of bonds in $G$, then the cocircuits of $J(G,\mc L)$ consist of the following:
\begin{itemize}
\item[(1)] bonds in $\mc L$ and
\item[(2)] tribonds and dibonds which do not contain a bond from $\mc L$.
\end{itemize}
Furthermore, if $N$ is an elementary quotient of a graphic matroid $M(G)$, then $N=J(G,\mc L)$ for some non-trivial linear class $\mc L$.
\end{theorem}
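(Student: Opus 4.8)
The plan is to obtain Theorem~\ref{T:Cocircuits2} as an immediate consequence of Theorem~\ref{T:Cocircuits} together with the standard description of the cocircuits of a contraction. Recall that for any matroid $N$ and any element $e_0$, the relation $(N/e_0)\dual = N\dual\bs e_0$ shows that the cocircuits of $N/e_0$ are exactly the circuits of $N\dual$ that avoid $e_0$, i.e., exactly the cocircuits of $N$ that do not contain $e_0$. Apply this with $N = J_0(G,\mc L)$. By Theorem~\ref{T:Cocircuits} the cocircuits of $J_0(G,\mc L)$ are the bonds in $\mc L$ (type~(1)), the sets $B\cup e_0$ with $B$ a bond not in $\mc L$ (type~(2)), and the tribonds and dibonds containing no bond of $\mc L$ (type~(3)). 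The type-(1) and type-(3) cocircuits are subsets of $E(G)$ and so avoid $e_0$, whereas every type-(2) cocircuit contains $e_0$; hence the cocircuits of $J(G,\mc L) = J_0(G,\mc L)/e_0$ are precisely the sets in items~(1) and~(2) of Theorem~\ref{T:Cocircuits2}. (These sets automatically form a clutter, being the subfamily of the cocircuit clutter of $J_0(G,\mc L)$ that avoids $e_0$.) Since $e_0$ is neither a loop nor a coloop of $J_0(G,\mc L)$ and $J_0(G,\mc L)\bs e_0 = M(G)$, the matroid $J(G,\mc L)$ is a genuine elementary quotient of $M(G)$.

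For the converse, let $N$ be an elementary quotient of $M(G)$, so $N = N'/e_0$ for some matroid $N'$ with $N'\bs e_0 = M(G)$ in which $e_0$ is neither a loop nor a coloop. Then $N'$ is a single-element extension of $M(G)$ by an element that is neither a loop nor a coloop, so the converse half of Theorem~\ref{T:Cocircuits} yields a non-trivial linear class of bonds $\mc L$ of $G$ for which $N'$ and $J_0(G,\mc L)$ have the same cocircuits; as a matroid is determined by its cocircuits, $N' = J_0(G,\mc L)$. Contracting $e_0$ gives $N = N'/e_0 = J_0(G,\mc L)/e_0 = J(G,\mc L)$. (The class $\mc L$ must be non-trivial here, since a trivial linear class gives a matroid in which $e_0$ is a loop or a coloop, contradicting the hypothesis on $N'$.)

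I expect no real obstacle; the argument is bookkeeping on top of Theorem~\ref{T:Cocircuits}. The only points that need a moment's care are quoting the contraction--cocircuit correspondence in the correct duality direction and tracking the trivial-versus-non-trivial distinction so that the statement lines up exactly with the non-degenerate elementary quotients of $M(G)$.
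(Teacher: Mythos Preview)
Your argument is correct and is essentially the paper's own approach: the paper declares this result to be an immediate corollary of Theorem~\ref{T:Cocircuits}, and you have simply written out the contraction--cocircuit bookkeeping that justifies that claim. Nothing further is needed.
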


\subsection{Bases and independent sets} \label{S:Bases}

Consider a partition $\pi=\{X_1,\ldots,X_k\}$ of $V(G)$ into nonempty parts such that each induced subgraph $G[X_i]$ is connected. Denote the set of such partitions for $G$ by $\lattice(G)$. The partial ordering is the usual refinement partial ordering; that is, given two such partitions $\pi_1=\{X_1,\ldots,X_k\}$ and $\pi_2=\{Y_1,\ldots,Y_l\}$ in $\lattice(G)$, we have $\pi_2\leq\pi_1$ when for each part $Y_i\in\pi_2$ there is a part $X_j\in\pi_1$ such that $Y_i\subseteq X_j$. It is well known that with this partial ordering $\lattice(G)$ is indeed a lattice. 
If $H$ is a subgraph of $G$ or subset of $E(G)$, then $H$ naturally induces a partition $\pi_H\in\lattice(G)$ corresponding to the connected components of $H\cup V(G)$.
If $H\subseteq G$ with $V(H)=V(G)$, then $\lattice(H)$ is a join subsemilattice of $\lattice(G)$. (For the proof, let $\pi \in \lattice(G)$.  Think of the edges of $\cup G[X_i]$ as a relation on $V(G)$ and extend it to an equivalence relation $\equiv_\pi$.  If $\tau \in \lattice(G)$, extend $\equiv_\pi \cup \equiv_\tau$ to an equivalence relation $\equiv$; then the join $\pi \vee \tau$ is the partition that corresponds to $\equiv$.  Supposing that $\pi,\tau \in \lattice(H)$, this formula for the join is the same whether viewed in $H$ or in $G$.) 
In particular $\lattice(G)$ is a join subsemilattice of $\lattice(K_n)$, which is the usual partition lattice of the set $V(K_n)$.

Given $\pi=\{X_1,\ldots,X_k\}\in\lattice(G)$, denote the set of edges in $G[X_1]\cup\dots\cup G[X_k]$ by $\interior(\pi)$, or $\interior_G(\pi)$ when necessary. The collection of such edge sets is of course exactly the set of flats of the graphic matroid $M(G)$. The set of edges of $G$ which are not in $\interior(\pi)$ is denoted by $\exterior(\pi)$ or $\exterior_G(\pi)$. Note that $\exterior(\pi)$ is a union of bonds.

Given a cobiased graph $(G,\mc L)$, we call $\pi\in\lattice(G)$ \emph{cobalanced} with respect to $\mc L$ when every bond in $\exterior(\pi)$ is cobalanced; otherwise the partition is \emph{un-cobalanced}. The maximal element of $\lattice(G)$ is $\pi_G$, that is, the partition of $V(G)$ given by the connected components of $G$ itself. The coatoms of $\lattice(G)$, that is, the elements of $\lattice(G)$ which are covered by $\pi_G$, are those partitions $\pi$ for which $\exterior(\pi)$ is a bond.

\begin{theorem} \label{T:Bases}
If $\mc L$ is a non-trivial linear class of bonds of $G$, then the bases of $J_0(G,\mc L)$ consist of the following:
\begin{itemize}
\item[(1)] edge sets of maximal forests of $G$ and
\item[(2)] sets of the form $F\cup e_0$ in which $F$ is a maximal forest with one edge deleted such that the bond $\exterior(\pi_F)$ is un-cobalanced.
\end{itemize}
\end{theorem}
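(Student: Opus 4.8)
The plan is to derive the bases of $J_0(G,\mc L)$ from the cocircuit description in Theorem~\ref{T:Cocircuits} by identifying which subsets of $E(G)\cup e_0$ are maximal among those containing no cocircuit (equivalently, are complements of hyperplanes... no — are maximal independent sets, i.e. contain no cocircuit's complement... ). More directly: a set $B$ is a basis of a matroid iff it is a maximal set containing no circuit, but it is cleaner here to use the dual characterization that $B$ is a basis iff $E\setminus B$ is a basis of the dual, i.e. $E\setminus B$ is a maximal set meeting every cocircuit... Actually the cleanest route: $B$ is a basis of $J_0$ iff $B$ is independent and $|B| = r(J_0)$. Since $e_0$ is neither a loop nor a coloop, and $J_0\bs e_0$ presumably equals $M(G)$ (this should be checked from the cocircuit list — deleting $e_0$ from the cocircuits of types (1), (2), (3) should recover exactly the cocircuits of $M(G)$, namely all bonds, since every bond is either in $\mc L$ or not, and the tribonds/dibonds are not cocircuits of $M(G)$ as they are non-minimal... wait, they must not survive as cocircuits after deletion), we get $r(J_0) = r(M(G)) = |V(G)| - c(G)$, the size of a maximal forest.

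First I would establish that $J_0(G,\mc L)\bs e_0 = M(G)$: by the general theory of single-element extensions, deleting the new element returns the original matroid, and Theorem~\ref{T:Cocircuits} says $J_0$ is a single-element extension of $M(G)$, so this is immediate. Hence $r(J_0) = r(M(G))$ and every basis of $J_0$ not containing $e_0$ is exactly a basis of $M(G)$, i.e. the edge set of a maximal forest of $G$; this gives item~(1). Next I would handle bases containing $e_0$. Such a basis has the form $F' \cup e_0$ where $F' \subseteq E(G)$, $|F'| = r(M(G)) - 1$, and $F' \cup e_0$ is independent in $J_0$. I claim $F'$ must itself be independent in $M(G)$ (else $F'$, hence $F'\cup e_0$, contains a graphic circuit, which remains a circuit of $J_0$ since $J_0\bs e_0 = M(G)$), so $F'$ is a maximal forest with one edge removed; write $F = F'$, so $\pi_F$ has exactly one more part than $\pi_G$, meaning $\exterior(\pi_F)$ is a single bond $B$.

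The crux is then to show: $F \cup e_0$ is independent in $J_0$ if and only if $B = \exterior(\pi_F)$ is un-cobalanced. For this I would use the contrapositive and the cocircuit structure. If $B \in \mc L$, then by Theorem~\ref{T:Cocircuits}(1) $B$ is a cocircuit of $J_0$; its complement $E(G)\cup e_0 \setminus B$ is a hyperplane, hence has rank $r(J_0) - 1 = r(M(G)) - 1 = |F|$; but $F \cup e_0 \subseteq E(G)\cup e_0 \setminus B$ (since $F$ is internal to the parts and $e_0\notin B$), so $F\cup e_0$ spans a flat of rank at most $|F|$ and cannot be independent of size $|F|+1$. Conversely, if $B \notin \mc L$, I must show $F \cup e_0$ is independent, equivalently that it is contained in no circuit, equivalently (by duality) that $E(G)\cup e_0 \setminus (F\cup e_0) = \exterior(\pi_F) = B$, together with the edges of $F$ appropriately, meets every cocircuit... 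The cleanest formulation: $F\cup e_0$ is independent iff it extends to a basis iff $r(F\cup e_0) = |F| + 1 = r(J_0)$, iff $F\cup e_0$ is spanning. Since $F$ spans the flat $\interior(\pi_F)$ of corank $1$, adding $e_0$ makes it spanning precisely when $e_0$ is not in the closure of $\interior(\pi_F)$, i.e. when $\interior(\pi_F) \cup e_0$ is not a hyperplane-or-below, i.e. when no hyperplane (complement of a cocircuit) of $J_0$ contains $\interior(\pi_F)\cup e_0$. A hyperplane containing $\interior(\pi_F)$ must have complement a cocircuit contained in $B = \exterior(\pi_F)$; scanning the list in Theorem~\ref{T:Cocircuits}, the only cocircuit that can be contained in a single bond $B$ is $B$ itself (type 1, when $B\in\mc L$) or $B\cup e_0$ (type 2, when $B\notin\mc L$) — tribonds and dibonds are unions of three or two distinct bonds and cannot sit inside one bond. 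Thus when $B\notin\mc L$, the unique cocircuit inside $B\cup e_0$ is $B\cup e_0$ itself, whose complement $\interior(\pi_F)$ is the unique hyperplane above $\interior(\pi_F)$, and it does not contain $e_0$; hence $\interior(\pi_F)\cup e_0$ is spanning, so $F\cup e_0$ is spanning and therefore a basis.

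I expect the main obstacle to be the careful bookkeeping in the converse direction — specifically, verifying rigorously that no cocircuit of types (2) or (3) other than $B\cup e_0$ itself can be contained in $B \cup e_0$, and pinning down exactly which cocircuits are ``dangerous'' for a given $\pi_F$. A dibond or tribond contained in $B\cup e_0$ would have to have all its bonds refining into $B$, which is impossible since a tribond/dibond involves at least two distinct bonds whose union strictly contains each; but one should double-check the degenerate possibility that $e_0$'s presence somehow allows a smaller configuration. Once that case analysis is nailed down, the equivalence ``$F\cup e_0$ basis $\iff$ $\exterior(\pi_F)$ un-cobalanced'' follows, completing item~(2) and the theorem.
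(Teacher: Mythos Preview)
Your argument is correct. Both you and the paper derive the bases from the cocircuit list in Theorem~\ref{T:Cocircuits}, but through different matroid characterizations. The paper invokes the fact that a basis is a \emph{minimal set intersecting every cocircuit} and then reads off the two cases directly (and rather tersely). You instead first pin down the rank via $J_0(G,\mc L)\bs e_0 = M(G)$, immediately obtaining item~(1), and for item~(2) you argue via closure: $F\cup e_0$ is a basis iff $e_0\notin\mathrm{cl}_{J_0}(F)$, which you reduce to the nonexistence of a cocircuit of $J_0$ contained in the bond $B=\exterior(\pi_F)$, and then you scan the three cocircuit types to see that this holds exactly when $B\notin\mc L$. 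Your route is more explicit about the case analysis that the paper's one-line appeal to Theorem~\ref{T:Cocircuits} leaves implicit; the paper's route is shorter because the transversal criterion handles both directions at once without needing to identify the unique hyperplane above $\interior(\pi_F)$. The ``obstacle'' you flagged (ruling out tribonds or dibonds inside a single bond $B$) is indeed the only nontrivial check, and your observation that any such configuration contains at least two distinct bonds, none of which can be properly contained in $B$, disposes of it.
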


When one edge is deleted from a maximal forest, one component tree is broken into two trees. The bond between those two trees is $\exterior(\pi_F)$.

\begin{proof}
For a general matroid $M$, $B$ is a basis if and only if $B$ is a minimal set which intersects every cocircuit (see, for example, \cite[p.77]{Oxley:2ndEdition}). Now since edge sets of cycles are dependent in $M(G)$, they are also dependent in its single-element extension $J_0(G,\mc L)$. So if $B$ is a base of $J_0(G,\mc L)$, then $B\bs e_0$ is the edge set of a forest in $G$. Theorem \ref{T:Cocircuits} now implies the following: $e_0\notin B$ if and only if $B$ is a maximal forest and $e_0\in B$ if and only if $B\bs e_0$ is obtained from a maximal forest by the deletion of one edge $e$ such that the bond $\exterior(\pi_F)$ is un-cobalanced.
\end{proof}

\begin{theorem} \label{T:Bases2}
If $\mc L$ is a non-trivial linear class of bonds of $G$, then the bases of $J(G,\mc L)$ consist of the
forests $F$ for which $\exterior(\pi_F)$ is an un-cobalanced bond.
\end{theorem}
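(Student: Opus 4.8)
The plan is to obtain the bases of $J(G,\mc L)=J_0(G,\mc L)/e_0$ directly from those of $J_0(G,\mc L)$, which are given by Theorem~\ref{T:Bases}. Recall the standard fact that if $e_0$ is not a loop of a matroid $N$, then the set $\mc B(N/e_0)$ of bases of $N/e_0$ equals $\{B\setminus e_0 : B\in\mc B(N),\ e_0\in B\}$. By Theorem~\ref{T:Cocircuits}, $e_0$ is neither a loop nor a coloop of $J_0(G,\mc L)$, so this applies: a set $F\subseteq E(G)$ is a basis of $J(G,\mc L)$ if and only if $F\cup e_0$ is a basis of $J_0(G,\mc L)$. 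The bases of $J_0(G,\mc L)$ of type~(1) in Theorem~\ref{T:Bases} do not contain $e_0$, so only type~(2) is relevant, and it says precisely that $F\cup e_0$ is a basis of $J_0(G,\mc L)$ exactly when $F$ is a maximal forest of $G$ with one edge deleted and the bond $\exterior(\pi_F)$ is un-cobalanced.

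It remains to identify this family of edge sets with the family of forests $F$ for which $\exterior(\pi_F)$ is an un-cobalanced bond, and for this it suffices to show that, for a forest $F$ of $G$, we have ``$F$ is a maximal forest minus one edge'' if and only if ``$\exterior(\pi_F)$ is a bond.'' By the characterization of coatoms recorded just before Theorem~\ref{T:Bases}, $\exterior(\pi_F)$ is a bond if and only if $\pi_F$ is a coatom of $\lattice(G)$. If $F$ is a maximal forest with one edge deleted, then deleting that edge breaks one tree into two, so $\pi_F$ is obtained from $\pi_G$ by splitting one component into two and is therefore a coatom (this is also the content of the remark following Theorem~\ref{T:Bases}). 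Conversely, if $\pi_F$ is a coatom, then $F$ has exactly one fewer edge than a maximal forest; writing $\exterior(\pi_F)=\delta(X)$ for the corresponding bond and choosing any $e\in\delta(X)$, the two endpoints of $e$ lie in two distinct components of $F$, so $F\cup e$ is a forest with the edge count of a maximal forest, hence is one, and $F=(F\cup e)\setminus e$. This gives the equivalence and hence the theorem.

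The only step needing real care is this last equivalence between the ``maximal forest minus an edge'' description and the lattice description via $\exterior(\pi_F)$; the rest is formal manipulation of single-element contraction together with an appeal to Theorem~\ref{T:Bases}. I do not anticipate a genuine obstacle — one should merely be mindful that $\pi_F$ is taken in $\lattice(G)$ on all of $V(G)$, so that isolated vertices of $G$ form singleton parts, but such vertices contribute nothing to $F$ or to any bond and cause no difficulty.
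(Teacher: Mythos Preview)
Your proof is correct and follows essentially the same approach as the paper: use that $e_0$ is not a loop or coloop of $J_0(G,\mc L)$, apply the standard contraction formula for bases, and read off the answer from Theorem~\ref{T:Bases}(2). You go a bit further than the paper by explicitly verifying that ``forest $F$ with $\exterior(\pi_F)$ an un-cobalanced bond'' coincides with ``maximal forest minus one edge with $\exterior(\pi_F)$ un-cobalanced,'' a step the paper leaves implicit.
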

\begin{proof}
Because $\mc L$ is non-trivial, $e_0$ is not a loop or coloop of $J_0(G,\mc L)$. Thus the bases of $J(G,\mc L)$ are obtained from the bases of $J_0(G,\mc L)$ that contain $e_0$ by removing $e_0$. The result now follows from Theorem \ref{T:Bases}.
\end{proof}

\begin{theorem} \label{T:Independence}
If $\mc L$ is a non-trivial linear class of bonds of $G$, then the independent sets of $J_0(G,\mc L)$ consist of the following:\begin{itemize}
\item[(1)] Edge sets of forests.
\item[(2)] Edge sets of the form $F\cup e_0$ in which $F$ is a forest and $\pi_F$ is un-cobalanced.
\end{itemize}
\end{theorem}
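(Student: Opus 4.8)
The plan is to read off Theorem~\ref{T:Independence} from the basis description in Theorem~\ref{T:Bases}, using the elementary fact that the independent sets of a matroid are exactly the subsets of its bases. So there are two things to check: every set of the two listed forms is independent, and every independent set has one of these two forms.

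For the forward direction, type (1) is immediate: a forest $F$ extends to a maximal (spanning) forest of $G$, which is a basis by Theorem~\ref{T:Bases}(1), so $F$ is independent. For type (2), starting from $F\cup e_0$ with $F$ a forest and $\pi_F$ un-cobalanced, I would fix an un-cobalanced bond $B=\delta(Y)\subseteq\exterior(\pi_F)$ witnessing the un-cobalance of $\pi_F$, where $Y$ is a proper nonempty subset of the component $C$ of $G$ that contains it and $G[Y]$, $G[C\setminus Y]$ are both connected. The key point is that $B\subseteq\exterior(\pi_F)$ forces every edge of $F$ to lie inside $G[Y]$, inside $G[C\setminus Y]$, or inside some other component of $G$ (no edge of $F$ crosses $B$); hence one can independently complete $F$ to a spanning tree of $G[Y]$, a spanning tree of $G[C\setminus Y]$, and a spanning tree of each remaining component. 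The resulting forest $F'$ contains $F$, it is a maximal forest with one edge removed (adding back any single edge of $B$ reconnects $C$), and $\exterior(\pi_{F'})=\delta(Y)=B$ is un-cobalanced. Theorem~\ref{T:Bases}(2) then gives that $F'\cup e_0$ is a basis of $J_0(G,\mc L)$, so $F\cup e_0\subseteq F'\cup e_0$ is independent.

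For the converse, let $I$ be independent, so $I$ lies in a basis $B$. If $e_0\notin I$, then in either case of Theorem~\ref{T:Bases} the set $I$ is contained in the edge set of a forest, hence is itself the edge set of a forest: form (1). If $e_0\in I$, then $B$ must contain $e_0$, so by Theorem~\ref{T:Bases} we have $B=F'\cup e_0$ with $F'$ a maximal forest minus an edge and $\exterior(\pi_{F'})$ an un-cobalanced bond. Then $F:=I\setminus e_0\subseteq F'$ is a forest, and $F\subseteq F'$ gives $\pi_F\le\pi_{F'}$ in $\lattice(G)$, hence $\interior(\pi_F)\subseteq\interior(\pi_{F'})$ and therefore $\exterior(\pi_{F'})\subseteq\exterior(\pi_F)$. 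Thus $\exterior(\pi_F)$ contains the un-cobalanced bond $\exterior(\pi_{F'})$, so $\pi_F$ is un-cobalanced and $I=F\cup e_0$ is of form (2).

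I expect the only genuine work to be the type-(2) forward direction: verifying that $F$ enlarges to a maximal-forest-minus-an-edge $F'$ with $F\subseteq F'$ and $\exterior(\pi_{F'})$ equal to the chosen un-cobalanced bond. The two observations that make this routine are that $B\subseteq\exterior(\pi_F)$ keeps the three spanning-tree completions from interfering with one another, and that deleting one edge from a spanning tree of $C$ cuts it into the pieces $Y$ and $C\setminus Y$, whose separating bond is exactly $\delta(Y)$. Everything else is bookkeeping with the refinement order on $\lattice(G)$ and the correspondence among coatoms, bonds, and maximal forests with one edge removed.
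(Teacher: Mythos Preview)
Your proof is correct and follows exactly the same approach as the paper, which simply states ``This follows from Theorem~\ref{T:Bases} because (1) and (2) describe exactly the subsets of the bases of $J_0(G,\mc L)$.'' You have merely (and carefully) spelled out the details the paper leaves implicit, in particular the extension argument showing that any $F\cup e_0$ with $\pi_F$ un-cobalanced embeds in a basis of type~(2).
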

\begin{proof}
This follows from Theorem \ref{T:Bases} because (1) and (2) describe exactly the subsets of the bases of $J_0(G,\mc L)$.
\end{proof}

\begin{theorem} \label{T:Independence2}
If $\mc L$ is a non-trivial linear class of bonds of $G$, then the independent sets of $J(G,\mc L)$ consist of the edge sets of forests $F$ such that $\pi_F$ is un-cobalanced.
\end{theorem}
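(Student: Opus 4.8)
The plan is to deduce the statement directly from Theorem \ref{T:Independence}, using the definition $J(G,\mc L)=J_0(G,\mc L)/e_0$ together with the standard description of the independent sets of a single-element contraction. Recall that for any matroid $N$ and any element $e$ that is not a loop of $N$, a subset $I\subseteq E(N)-e$ is independent in $N/e$ if and only if $I\cup e$ is independent in $N$.

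First I would observe that, since $\mc L$ is non-trivial, Theorem \ref{T:Cocircuits} guarantees that $e_0$ is not a loop of $J_0(G,\mc L)$. Hence the recalled fact applies with $N=J_0(G,\mc L)$ and $e=e_0$, so the independent sets of $J(G,\mc L)$ are exactly the sets $I\subseteq E(G)$ for which $I\cup e_0$ is independent in $J_0(G,\mc L)$. Then I would invoke Theorem \ref{T:Independence}: among the independent sets of $J_0(G,\mc L)$, the ones containing $e_0$ are precisely the sets of the form $F\cup e_0$ with $F$ a forest and $\pi_F$ un-cobalanced (the type~(1) sets do not contain $e_0$). Combining these two observations, $I$ is independent in $J(G,\mc L)$ if and only if $I$ is the edge set of a forest $F$ with $\pi_F$ un-cobalanced, which is exactly the claim.

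There is essentially no obstacle here; the result is a short corollary of Theorem \ref{T:Independence}, and the only points one must be slightly careful about — that $e_0$ is not a loop, and the contraction-independence criterion — are both immediate. (If one instead wished to argue from Theorem \ref{T:Bases2}, using that the independent sets of a matroid are exactly the subsets of its bases, the one non-trivial point would be that every forest $F$ with $\pi_F$ un-cobalanced extends to a base: choose an un-cobalanced bond $\delta(Y)\subseteq\exterior(\pi_F)$ with $Y$ a union of parts of $\pi_F$, extend $F\cap E(G[Y])$ to a spanning tree of $G[Y]$ and $F\cap E(G-Y)$ to a maximal forest of $G-Y$; the resulting forest $F'\supseteq F$ satisfies $\exterior(\pi_{F'})=\delta(Y)$, an un-cobalanced bond. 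The contraction route above avoids even this.)
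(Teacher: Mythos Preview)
Your argument via Theorem~\ref{T:Independence} and the contraction $J(G,\mc L)=J_0(G,\mc L)/e_0$ is correct. The paper instead deduces the result from Theorem~\ref{T:Bases2}, asserting that the sets described are exactly the subsets of the bases of $J(G,\mc L)$. The two routes are close cousins: the paper's proof implicitly needs the extension step you flag in your parenthetical remark (that a forest $F$ with $\pi_F$ un-cobalanced enlarges to a forest $F'$ with $\exterior(\pi_{F'})$ a single un-cobalanced bond), whereas your contraction route pushes that work back into the already-established Theorem~\ref{T:Independence}. Your approach is thus slightly more self-contained at this point in the exposition and mirrors exactly how Theorem~\ref{T:Bases2} was obtained from Theorem~\ref{T:Bases}; the paper's approach keeps the parallel with the proof of Theorem~\ref{T:Independence} from Theorem~\ref{T:Bases}. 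Neither buys anything substantive over the other.
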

\begin{proof}
This follows from Theorem \ref{T:Bases2} because these are exactly the subsets of the bases of $J(G,\mc L)$.
\end{proof}

\subsection{Rank} \label{S:Rank}

If $H$ is a subgraph of $G$ or subset of $E(G)$, then $|\pi_H|$ is the number of connected components of $H\cup V(G)$. Thus if $X\subseteq E(G)$, then $r_{M(G)}(X)=|V(G)|-|\pi_X|$.

\begin{theorem} \label{T:Rank}
If $\mc L$ is a non-trivial linear class of bonds of $G$ and $X\subseteq E(G)$, then
\begin{itemize}
\item[(1)] $r_{J_0(G,\mc L)}(X)=|V(G)|-|\pi_X|$,
\item[(2)] $r_{J_0(G,\mc L)}(X\cup e_0)=|V(G)|-|\pi_X|$ when $\pi_X$ is cobalanced, and
\item[(3)] $r_{J_0(G,\mc L)}(X\cup e_0)=|V(G)|-|\pi_X|+1$ when $\pi_X$ is un-cobalanced.
\end{itemize}
\end{theorem}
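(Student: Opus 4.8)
The plan is to derive the rank formula directly from the characterization of independent sets in Theorem \ref{T:Independence}, using the fact that the rank of a set $Y$ in any matroid is the size of a maximal independent subset of $Y$. I would split into the three cases of the statement.

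For (1), the set $X \subseteq E(G)$ contains no $e_0$, so by Theorem \ref{T:Independence} the independent subsets of $X$ in $J_0(G,\mc L)$ are exactly the edge sets of forests contained in $X$; a maximal such forest is a maximal spanning forest of the subgraph $(V(G), X)$, which has $|V(G)| - |\pi_X|$ edges. This is just the statement that $J_0(G,\mc L)$ restricted to $E(G)$ is $M(G)$, which we already know since $J_0$ is a single-element extension of $M(G)$, so (1) is essentially immediate.

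For (2) and (3), consider a maximal independent subset $I$ of $X \cup e_0$. Either $e_0 \notin I$, in which case $I$ is a maximal forest in $X$ of size $|V(G)| - |\pi_X|$ as before; or $e_0 \in I$, in which case by Theorem \ref{T:Independence}(2) we have $I = F \cup e_0$ with $F \subseteq X$ a forest and $\pi_F$ un-cobalanced, so $|I| = |F| + 1$. The question is whether we can find such an $F$ of size $|V(G)| - |\pi_X|$, i.e.\ a maximal spanning forest of $(V(G),X)$ whose partition is un-cobalanced. Note that a maximal forest $F$ in $X$ induces $\pi_F = \pi_X$, so the condition ``$\pi_F$ un-cobalanced'' becomes ``$\pi_X$ un-cobalanced.'' Hence: if $\pi_X$ is un-cobalanced, any maximal forest $F$ in $X$ gives the independent set $F \cup e_0$ of size $|V(G)| - |\pi_X| + 1$, and this beats the forests not containing $e_0$, giving (3); if $\pi_X$ is cobalanced, then no forest $F \subseteq X$ with $\pi_F$ un-cobalanced can have $\pi_F = \pi_X$, and since $\pi_F \le \pi_X$ in $\lattice(G)$ for any $F \subseteq X$, one must check that $\pi_F \le \pi_X$ and $\pi_X$ cobalanced force $\pi_F$ cobalanced — this is the one genuine point requiring the structure of $\mc L$. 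Then no independent subset of $X \cup e_0$ contains $e_0$, and the rank is $|V(G)| - |\pi_X|$, giving (2).

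The main obstacle is the monotonicity claim used in case (2): if $\pi_F \le \pi_X$ and $\pi_X$ is cobalanced then $\pi_F$ is cobalanced; equivalently, if $\pi_F$ is un-cobalanced then so is every $\pi \ge \pi_F$. Unwinding the definitions, $\pi_F$ un-cobalanced means $\exterior(\pi_F)$ contains an un-cobalanced bond $B$. As $\pi$ coarsens $\pi_F$, we have $\interior(\pi_F) \subseteq \interior(\pi)$, hence $\exterior(\pi) \subseteq \exterior(\pi_F)$, so it is \emph{not} immediate that $B \subseteq \exterior(\pi)$. One must instead argue that some un-cobalanced bond survives in $\exterior(\pi)$; the natural approach is to coarsen one step at a time (it suffices to handle the case where $\pi$ covers $\pi_F$, i.e.\ two parts are merged along a single bond $B'$), and use the linear-class axiom: $B' \in \exterior(\pi_F)$, and $\exterior(\pi_F) \setminus \exterior(\pi)$ consists of $B'$ together with, possibly, other bonds merged by this step, with the tribond/linear-class condition controlling how cobalance can propagate. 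Alternatively, and perhaps more cleanly, one can avoid this entirely by invoking that $\exterior$ gives an order-reversing bijection between $\lattice(G)$ and the flats' complements and that the cobalanced partitions form an order ideal essentially by definition together with a short tribond argument; I would present whichever version is shortest, isolating the monotonicity of cobalance as a small lemma if it is not already available from earlier in the paper.
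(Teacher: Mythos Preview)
Your overall approach via maximal independent subsets is fine and close to the paper's, but the argument for case (2) contains a genuine error. The monotonicity claim you isolate as the ``main obstacle'' --- that $\pi_F \le \pi_X$ with $\pi_X$ cobalanced forces $\pi_F$ cobalanced --- is \emph{false}, and no tribond argument will rescue it. For a small counterexample take $G$ to be the path $a\!-\!b\!-\!c$ with edges $e_1,e_2$ and $\mc L=\{\{e_1\}\}$ (there are no tribonds, so this is a linear class). With $X=\{e_1,e_2\}$ the partition $\pi_X$ is the one-block partition, which is vacuously cobalanced, yet the forest $F=\{e_1\}$ has $\exterior(\pi_F)=\{e_2\}\notin\mc L$, so $\pi_F$ is un-cobalanced. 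Consequently your stronger conclusion ``no independent subset of $X\cup e_0$ contains $e_0$'' is also false: in the example $\{e_1\}\cup e_0$ is independent.

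The good news is that you do not need this monotonicity at all. For case (2) it suffices to observe that any forest $F'\subseteq X$ with $|F'|=|V(G)|-|\pi_X|$ is a \emph{maximal} forest of $X$ and hence satisfies $\pi_{F'}=\pi_X$, which is cobalanced; therefore $F'\cup e_0$ is dependent. So every independent subset of $X\cup e_0$ containing $e_0$ has size at most $|V(G)|-|\pi_X|$, and since a maximal forest of $X$ already attains that size, the rank is $|V(G)|-|\pi_X|$. (Equivalently: a maximal forest $F$ of $X$ has $F\cup e_0$ dependent, so $F$ is already maximal independent in $X\cup e_0$.) This is essentially the paper's argument, phrased there as ``$X\cup e_0$ contains a circuit through $e_0$ iff $\pi_X$ is cobalanced''; the only monotonicity used implicitly is the trivial \emph{upward} direction ($\pi_J$ cobalanced and $\pi_J\le\pi_X$ imply $\pi_X$ cobalanced, because $\exterior(\pi_X)\subseteq\exterior(\pi_J)$), which is the reverse of what you wrote.
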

\begin{proof}
Part (1) follows from the fact that the rank of $X$ in $M(G)$ and its single-element extension $J_0(G,\mc L)$ must be the same. Theorem \ref{T:Independence} implies that there is a circuit containing $e_0$ in the set $X\cup e_0$ if and only if $\pi_X$ is cobalanced. This implies Parts (2) and (3).
\end{proof}

\begin{theorem} \label{T:Rank2}
If $\mc L$ is a non-trivial linear class of bonds of $G$ and $X\subseteq E(G)$, then
\begin{itemize}
\item[(1)] $r_{J(G,\mc L)}(X)=|V(G)|-|\pi_X|-1$ when $\pi_X$ is cobalanced,
\item[(2)] $r_{J(G,\mc L)}(X)=|V(G)|-|\pi_X|$ when $\pi_X$ is un-cobalanced.
\end{itemize}
\end{theorem}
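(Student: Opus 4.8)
The plan is to derive Theorem~\ref{T:Rank2} directly from Theorem~\ref{T:Rank} using the contraction formula $J(G,\mc L)=J_0(G,\mc L)/e_0$. Recall the standard identity for rank in a contraction: for any matroid $M$ and element $e$, and any $X\subseteq E(M)-e$, we have $r_{M/e}(X)=r_M(X\cup e)-r_M(\{e\})$. Since $\mc L$ is non-trivial, $e_0$ is not a loop of $J_0(G,\mc L)$ (by Theorem~\ref{T:Cocircuits}), so $r_{J_0(G,\mc L)}(\{e_0\})=1$. Hence for every $X\subseteq E(G)$,
\[
r_{J(G,\mc L)}(X)=r_{J_0(G,\mc L)}(X\cup e_0)-1.
\]

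From here the two cases are immediate. If $\pi_X$ is cobalanced, Theorem~\ref{T:Rank}(2) gives $r_{J_0(G,\mc L)}(X\cup e_0)=|V(G)|-|\pi_X|$, so $r_{J(G,\mc L)}(X)=|V(G)|-|\pi_X|-1$, which is Part~(1). If $\pi_X$ is un-cobalanced, Theorem~\ref{T:Rank}(3) gives $r_{J_0(G,\mc L)}(X\cup e_0)=|V(G)|-|\pi_X|+1$, so $r_{J(G,\mc L)}(X)=|V(G)|-|\pi_X|$, which is Part~(2).

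There is essentially no obstacle here; the only point requiring a word of justification is that $e_0$ is not a loop (equivalently $r_{J_0(G,\mc L)}(\{e_0\})=1$), which is exactly the content of the first sentence of Theorem~\ref{T:Cocircuits} under the hypothesis that $\mc L$ is non-trivial. Alternatively, one could avoid invoking the contraction rank formula altogether and argue from Theorem~\ref{T:Independence2}: a maximum-size independent subset of $X$ in $J(G,\mc L)$ is a maximal forest $F\subseteq X$ whose exterior bond (in the restriction to $X$) corresponds appropriately, and counting edges of such a forest against $|V(G)|-|\pi_X|$ gives the same two cases depending on whether $\pi_X$ is cobalanced. The contraction-formula route is cleaner, so I would present that one, with the loop remark as the single substantive step.
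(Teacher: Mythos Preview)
Your proof is correct and follows essentially the same approach as the paper: both derive Theorem~\ref{T:Rank2} from Theorem~\ref{T:Rank} via the defining contraction $J(G,\mc L)=J_0(G,\mc L)/e_0$. You have simply spelled out the contraction rank formula and the non-loop status of $e_0$ explicitly, which the paper leaves implicit.
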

\begin{proof}
This follows from Theorem \ref{T:Rank} and the fact that $J(G,\mc L)=J_0(G,\mc L)/e_0$.
\end{proof}

\subsection{Circuits} \label{S:Circuits}

If $J$ is a forest in $(G,\mc L)$ for which $\pi_J$ is cobalanced and $J$ is minimal with respect to this property, then $J$ is called an $\mc L$-\emph{join} of the cobiased graph $(G,\mc L)$.  Being an $\mc L$-join means that deleting any edge of $J$ creates a bond that is not in $\mc L$.

\begin{theorem} \label{T:Circuits}
If $\mc L$ is a non-trivial linear class of bonds of $G$, then the circuits of $J_0(G,\mc L)$ consist of the following:
\begin{itemize}
\item[(1)] edges sets of the form $J\cup e_0$ in which $J$ is an $\mc L$-join and
\item[(2)] edges sets of cycles.
\end{itemize}
\end{theorem}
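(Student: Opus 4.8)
The plan is to read off the circuits as the minimal dependent sets, using the description of the independent sets of $J_0(G,\mc L)$ from Theorem~\ref{T:Independence}. That theorem tells us that a subset of $E(G)$ is dependent in $J_0(G,\mc L)$ exactly when it contains a cycle, while a set $X\cup e_0$ with $X\subseteq E(G)$ is dependent exactly when either $X$ contains a cycle, or $X$ is a forest with $\pi_X$ cobalanced. I treat circuits not containing $e_0$ and circuits containing $e_0$ separately.

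For a circuit $C$ with $e_0\notin C$: here $C\subseteq E(G)$, so $C$ is dependent if and only if it contains a cycle, and a minimal such set is precisely the edge set of a cycle. Conversely, every cycle is dependent, all of its proper subsets are forests and hence independent by Theorem~\ref{T:Independence}(1), and it contains no $e_0$; so it is a circuit. This yields item~(2).

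For a circuit $C=X\cup e_0$ with $X\subseteq E(G)$: since $X=C\setminus e_0$ is a proper subset of $C$, it is independent, hence a forest; dependence of $C$ then forces $\pi_X$ to be cobalanced. Moreover, for every proper subset $X'\subsetneq X$, the set $X'\cup e_0$ is again a proper subset of $C$ and therefore independent, while $X'$ is a forest, so Theorem~\ref{T:Independence}(2) gives that $\pi_{X'}$ is un-cobalanced. Thus $X$ is a forest with $\pi_X$ cobalanced that is minimal with respect to this property, i.e.\ $X$ is an $\mc L$-join. Conversely, if $J$ is an $\mc L$-join, then $J\cup e_0$ is dependent because $J$ is a forest with $\pi_J$ cobalanced; and each proper subset of $J\cup e_0$ is independent — it is either a subset of the forest $J$, or a set $J'\cup e_0$ with $J'\subsetneq J$, and for the latter $J'$ is a forest with $\pi_{J'}$ un-cobalanced by minimality of $J$, so it is independent by Theorem~\ref{T:Independence}(2). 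Hence $J\cup e_0$ is a circuit, giving item~(1).

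Since every circuit of $J_0(G,\mc L)$ either contains $e_0$ or does not, the two cases together prove the theorem. The only mildly delicate point is the bookkeeping in the second case, namely aligning the minimality built into the definition of an $\mc L$-join with the minimality of a circuit; this is handled by the simple observation that, $X$ being a forest, every proper subset of $X$ is a forest and pairs with $e_0$ to form a proper subset of $C$, so circuit-minimality of $C$ is exactly the statement that no proper sub-forest of $X$ has a cobalanced partition. There is no serious obstacle beyond this.
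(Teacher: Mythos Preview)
Your proof is correct and follows essentially the same approach as the paper: both arguments split on whether the circuit contains $e_0$, invoke Theorem~\ref{T:Independence} to identify dependent sets, and then take minimal ones. The only cosmetic difference is that the paper disposes of item~(2) by noting that cycles are circuits of $M(G)$ and hence of its single-element extension $J_0(G,\mc L)$, whereas you verify directly from Theorem~\ref{T:Independence}(1) that a cycle is a minimal dependent set; these are equivalent and equally short.
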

\begin{proof}
Edge sets of cycles are circuits of $M(G)$ and therefore are also circuits in $J_0(G,\mc L)$. Hence, any other circuit consists of the edge set of some forest along with $e_0$. Suppose that $F$ is the edge set of a forest for which $F\cup e_0$ is a circuit. Theorem \ref{T:Independence} implies that $F\cup e_0$ is independent when $\pi_F$ is un-cobalanced and dependent when $\pi_F$ is cobalanced. Thus $F\cup e_0$ is a circuit when $\pi_F$ is cobalanced and $F$ is minimal with respect to this property. Thus $F$ is an $\mc L$-join. \end{proof}

\begin{theorem} \label{T:Circuits2}
If $\mc L$ is a non-trivial linear class of bonds of $G$, then the circuits of $J(G,\mc L)$ consist of the following:
\begin{itemize}
\item[(1)] edge sets of $\mc L$-joins and
\item[(2)] edge sets of cycles that do not contain $\mc L$-joins.
\end{itemize}
\end{theorem}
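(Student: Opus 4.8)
The plan is to read off the circuits of $J(G,\mc L)=J_0(G,\mc L)/e_0$ directly from the circuits of $J_0(G,\mc L)$ given in Theorem \ref{T:Circuits}, using the standard description of circuits under contraction: for a matroid $M$ and an element $e_0$, the circuits of $M/e_0$ are exactly the minimal nonempty members of the family $\{\,C\setminus e_0 : C \text{ a circuit of } M\,\}$ (see, e.g., \cite{Oxley:2ndEdition}).

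First I would assemble the candidate family. By Theorem \ref{T:Circuits} the circuits of $J_0(G,\mc L)$ are the sets $J\cup e_0$ with $J$ an $\mc L$-join, together with the edge sets of cycles of $G$. Deleting $e_0$ turns $J\cup e_0$ into $J$, while the cycles (which do not contain $e_0$) are unchanged, so the relevant family is
$\mc C=\{\,J : J \text{ an } \mc L\text{-join}\,\}\cup\{\,C : C \text{ the edge set of a cycle of } G\,\}$.
Every member is nonempty: cycles are nonempty, and an $\mc L$-join is nonempty because $\mc L$ is non-trivial, so $e_0$ is not a loop of $J_0(G,\mc L)$. It then remains to pick out the minimal members of $\mc C$.

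Next I would analyze the containment relations within $\mc C$. Edge sets of cycles are the circuits of $M(G)$, hence form an antichain. The $\mc L$-joins also form an antichain: if $J_1\subsetneq J_2$ were both $\mc L$-joins, then $\pi_{J_1}$ being cobalanced would contradict the minimality built into $J_2$ being an $\mc L$-join. Finally, an $\mc L$-join is a forest and therefore contains no cycle, so no $\mc L$-join is a proper superset of a member of $\mc C$. Consequently every $\mc L$-join is a minimal member of $\mc C$, and a cycle is a minimal member of $\mc C$ precisely when it contains no $\mc L$-join. This yields exactly list (1)--(2), completing the proof. The only step requiring a moment's care is verifying that the $\mc L$-joins form an antichain so that all of them survive as circuits; the rest is a routine application of the contraction circuit formula, and I foresee no genuine obstacle.
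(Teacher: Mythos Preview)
Your proposal is correct and follows exactly the paper's approach: the paper's proof simply says ``This follows from Theorem \ref{T:Circuits} and the fact that $J(G,\mc L)=J_0(G,\mc L)/e_0$,'' and you have spelled out the routine details of applying the contraction circuit formula. Your extra verifications (nonemptiness of $\mc L$-joins, the antichain observations) are accurate and make explicit what the paper leaves to the reader.
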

\begin{proof}
This follows from Theorem \ref{T:Circuits} and the fact that $J(G,\mc L)=J_0(G,\mc L)/e_0$.
\end{proof}

\subsection{Deletions and contractions} \label{S:DeletionsContractions1}

Let $G$ be a graph and $e$ a link in $G$. The bonds of $G/e$ are the bonds of $G$ which do not contain $e$. Define $(G,\mc L)/e=(G/e,\mc L/e)$ in which $\mc L/e$ is the set of all bonds $B\in\mc L$ which do not contain $e$. Now $(G/e,\mc L/e)$ is a cobiased graph because any tribond of $G/e$ is a tribond of $G$.

The situation for deletions is only slightly more complicated. If $B$ is a bond in $G\bs e$, then either $B$ or $B\cup e$ is a bond in $G$. We define $(G,\mc L)\bs e=(G\bs e,\mc L\bs e)$ in which $\mc L\bs e$ is the set of all bonds $B$ in $G\bs e$ for which either $B$ or $B\cup e\in\mc L$. Now $(G\bs e,\mc L\bs e)$ is a cobiased graph because if $T$ is a tribond of $G\bs e$, then either $T$ or $T\cup e$ is a tribond of $G$.

\begin{theorem} \label{T:Minors}
If $(G,\mc L)$ is a cobiased graph and $e$ is a link in $G$, then \begin{itemize}
\item[(1)] $[J_0(G,\mc L)]\bs e=J_0(G\bs e,\mc L\bs e)$,
\item[(2)] $[J(G,\mc L)]\bs e=J(G\bs e,\mc L\bs e)$,
\item[(3)] $[J_0(G,\mc L)]/e=J_0(G/e,\mc L/e)$, and
\item[(4)] $[J(G,\mc L)]/e=J(G/e,\mc L/e)$.
\end{itemize}
\end{theorem}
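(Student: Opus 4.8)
The plan is to reduce parts (2) and (4) to parts (1) and (3), and then to prove (1) and (3) by comparing cocircuits via Theorem \ref{T:Cocircuits}. Since $e\neq e_0$, deletion of $e$ commutes with contraction of $e_0$, and contraction of $e$ commutes with contraction of $e_0$; hence $[J(G,\mc L)]\bs e=[J_0(G,\mc L)/e_0]\bs e=[J_0(G,\mc L)\bs e]/e_0$ and $[J(G,\mc L)]/e=[J_0(G,\mc L)/e_0]/e=[J_0(G,\mc L)/e]/e_0$. Granting (1) and (3), and using $J(H,\mc M)=J_0(H,\mc M)/e_0$ for any cobiased graph $(H,\mc M)$, parts (2) and (4) follow by contracting $e_0$. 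So it suffices to establish (1) and (3).

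For (3): because $e$ is a link, $e$ is not a loop of $M(G)$, so $M(G)/e=M(G/e)$; since $e\neq e_0$, the matroid $[J_0(G,\mc L)]/e$ is a single-element extension of $M(G/e)$ by $e_0$. As $(N/e)^*=N^*\bs e$, the cocircuits of $[J_0(G,\mc L)]/e$ are exactly the cocircuits of $J_0(G,\mc L)$ that avoid $e$. Now I would invoke three facts: the bonds of $G/e$ are the bonds of $G$ avoiding $e$; the tribonds and dibonds of $G/e$ are the tribonds and dibonds of $G$ avoiding $e$ (both recorded just before the theorem); and $B\in\mc L/e$ iff $B\in\mc L$ for any bond $B$ of $G$ with $e\notin B$. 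These immediately identify the sets of types~(1)--(3) in Theorem \ref{T:Cocircuits} for $J_0(G,\mc L)$ that avoid $e$ with the corresponding sets for $J_0(G/e,\mc L/e)$. Since a matroid is determined by its cocircuits, this proves (3) when $\mc L/e$ is non-trivial; if $\mc L/e$ is trivial the same bookkeeping shows $[J_0(G,\mc L)]/e$ is $M(G/e)$ with $e_0$ a loop or coloop, as the convention requires.

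For (1): likewise $[J_0(G,\mc L)]\bs e$ is a single-element extension of $M(G)\bs e=M(G\bs e)$ by $e_0$, and since $(N\bs e)^*=N^*/e$, a set $S\subseteq E(G\bs e)\cup e_0$ is a cocircuit of $[J_0(G,\mc L)]\bs e$ if and only if $S$ is a cocircuit of $J_0(G,\mc L)$ avoiding $e$ or $S\cup e$ is a cocircuit of $J_0(G,\mc L)$. Applied to a bond $B$ of $G\bs e$: $B$ is a cocircuit of $[J_0(G,\mc L)]\bs e$ iff $B$ or $B\cup e$ is a cocircuit of $J_0(G,\mc L)$. Recalling that exactly one of $B$, $B\cup e$ is a bond of $G$, I would split into the case ``$B\cup e$ a bond of $G$'' (which meets type~(1) of Theorem \ref{T:Cocircuits}) and the case ``$B$ a bond of $G$'' (which meets type~(1), and type~(3) when $B\cup e$ is a tribond or dibond of $G$ containing no bond of $\mc L$), and conclude that the cobalanced bonds of $[J_0(G,\mc L)]\bs e$ are precisely $\mc L\bs e$. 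Combining this with the fact, also recorded before the theorem, that the tribonds and dibonds of $G\bs e$ arise from those of $G$ by possibly adjoining $e$, the full cocircuit list of $[J_0(G,\mc L)]\bs e$ matches that of $J_0(G\bs e,\mc L\bs e)$, which gives (1); the trivial case of $\mc L\bs e$ (and the possibility that $e_0$ becomes a coloop when $e$ is a bridge with $\{e\}\notin\mc L$) is checked directly.

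The main obstacle is the deletion case. In a contraction, bonds, tribonds and dibonds simply restrict, so (3) is essentially formal; in a deletion a tribond or dibond of $G$ can cease to be one in $G\bs e$, a non-bond can become a bond, and $e_0$ can change from a non-loop, non-coloop element into a coloop, so the identification of the cobalanced bonds of $[J_0(G,\mc L)]\bs e$ with $\mc L\bs e$ must be carried through carefully in exactly these degenerate configurations. An alternative route, which may be cleaner, is to run the same two reductions but compare rank functions using Theorems \ref{T:Rank} and \ref{T:Rank2}; then the content reduces to the single assertion that, for $X$ avoiding $e$, the cobalance status of $\pi_X$ is the same computed in $G$ as in $G\bs e$ (resp.\ $G/e$), equivalently that $\exterior(\pi_X)$ and its decomposition into bonds transform correctly under the minor operation.
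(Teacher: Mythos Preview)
Your reduction of (2) and (4) to (1) and (3) via commuting $\bs e$ and $/e$ with $/e_0$ is correct and is a tidier organization than the paper's, which proves all four parts separately: (2) first by comparing dependent sets, (1) ``similarly'' to (2), and (3)--(4) by comparing cocircuits. Your argument for (3) is essentially the paper's.

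The one genuine slip is in your cocircuit route for (1). The statement ``$S$ is a cocircuit of $[J_0(G,\mc L)]\bs e$ iff $S$ or $S\cup e$ is a cocircuit of $J_0(G,\mc L)$'' is false in general: cocircuits of $N\bs e$ are the \emph{minimal} nonempty sets among $\{C^*\bs e:C^*\in\mc C^*(N)\}$, and it can happen that a cocircuit $C_2^*$ of $J_0$ with $e\notin C_2^*$ properly contains $C_1^*\bs e$ for some cocircuit $C_1^*$ with $e\in C_1^*$ (for instance when $B$ is a bond of $G$ with $e\notin B$ and $B\cup e$ is a tribond or dibond). Your subsequent case analysis does not visibly address this minimality, so as written the identification of the cobalanced bonds of $[J_0(G,\mc L)]\bs e$ with $\mc L\bs e$ is incomplete.

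The paper avoids this by proving (1) and (2) through dependent sets rather than cocircuits: a set $D\subseteq E(G)\bs e$ is dependent in either matroid iff it contains a cycle, or is a forest whose partition $\pi_D$ is cobalanced, and the point is simply that $\pi_D$ and its cobalance status are the same whether computed in $G$ or in $G\bs e$. That is exactly your proposed ``alternative route'' via the rank functions of Theorems \ref{T:Rank} and \ref{T:Rank2}, and it goes through without difficulty. So your overall plan is sound; just replace the cocircuit argument for (1) with the dependent-set (equivalently rank) comparison, which is both cleaner and what the paper does.
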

\begin{proof}
(2) We compare dependent sets. Let $D\subseteq E(G)$. If $D$ is a dependent set of $[J(G,\mc L)]\bs e$, then $e\notin D$ and either $D$ contains a cycle or $D$ is a forest such that $\pi_D\in\lattice(G)$ is a cobalanced partition. If $D$ contains a cycle, then it is a dependent set in $J(G\bs e,\mc L\bs e)$. If $D$ is a forest, then because $e\notin D$, the partition of $V(G)$ associated with $D$ is the same for $G$ and $G\bs e$. Thus $\pi_D$ is still a cobalanced partition in $\lattice(G\bs e)$, so $D$ is a dependent set of $J(G\bs e,\mc L\bs e)$. Conversely, if $D$ is a dependent set of $J(G\bs e,\mc L\bs e)$, then either $D$ contains a cycle, in which case it is dependent in $[J(G,\mc L)]\bs e$, or $D$ is a forest of $G\bs e$ such that $\pi_D\in\lattice(G\bs e)$ is cobalanced. Again, the partition of $V(G)$ associated with $D$ is the same in $G$ as in $G\bs e$, so $\pi_D\in\lattice(G)$ is cobalanced, from which it follows that $D$ is dependent in $[J(G,\mc L)]\bs e$.

\smallskip\noindent(1)  The proof is similar to that of Part (2) with only the added detail of noting the presence of $e_0$ in dependent sets without cycles.

\smallskip\noindent(3 and 4) These follow a similar strategy to the proofs of (1) and (2) but by comparing cocircuits. The details are left to the reader.
\end{proof}

\subsection{Vertex union} \label{S:CutVertices}

An operation that preserves graphic matroids is the union of two graphs at a single vertex.  This operation has the same property for cobiased graphs and join matroids, as we see in Theorem \ref{T:CutVertices}.  That fact is hinted at within discussions in \cite{Recski2} but is not fully developed.

\begin{theorem} \label{T:CutVertices}
If $G_1=H\cup K$ in which $H\cap K$ is a single vertex and $G_2=H\cup K$ in which $H\cap K$ is empty, then
\begin{itemize}
\item[(1)] $\mc L$ is a linear class of bonds in $G_1$ if and only if $\mc L$ is a linear class of bonds in $G_2$ and
\item[(2)] $J_0(G_1,\mc L)=J_0(G_2,\mc L)$ and $J(G_1,\mc L)=J(G_2,\mc L)$.
\end{itemize}
\end{theorem}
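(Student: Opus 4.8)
The plan is to reduce the whole statement to two facts about the pair $G_1,G_2$: that they have the same bonds (as edge sets) and the same tribonds (with matching decompositions into triples of bonds), and that $M(G_1)=M(G_2)$. The latter is classical --- a cycle of $G_1$ cannot use the shared vertex $v=H\cap K$ with one incident edge in $H$ and one in $K$ (it could not return to $v$), so the cycles, hence the circuits, of $G_1$ and $G_2$ agree. Granting the bond/tribond fact, Part~(1) is immediate: being a linear class of bonds is a condition phrased purely in terms of which bonds lie in $\mc L$ and of the set of tribonds together with the three bonds each one comprises, and all of this data is identical for $G_1$ and $G_2$.

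To establish the bond/tribond fact, let $C_H$ and $C_K$ be the components of $H$ and of $K$ containing $v$, and $C=C_H\cup C_K$; this is the only component in which $G_1$ and $G_2$ differ. For a bond $\delta(Y)$ of $C$, choose the side not containing $v$; since $v$ is a cut vertex of $C$, connectedness of that side forces it into $V(C_H)\setminus\{v\}$ or into $V(C_K)\setminus\{v\}$, say the former, whence every edge meeting it lies in $H$ and $\delta(Y)$ is a bond of $C_H$ (and the complementary side is checked to be connected within $C_H$ as well). Conversely, a bond of $C_H$ is extended to a bond of $C$ by adjoining all of $V(C_K)$ to whichever side contains $v$. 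Hence the bonds of $C$ are exactly the bonds of $C_H$ together with the bonds of $C_K$, which is exactly the picture in $G_2$. The same reasoning applies to tribonds: in a tripartition $\{X_1,X_2,X_3\}$ of $V(C)$ into connected parts with $v\in X_3$, each of $X_1,X_2$ lies within $V(C_H)\setminus\{v\}$ or within $V(C_K)\setminus\{v\}$; if on opposite sides there is no edge between them, so it is not a tribond, and if both lie in (say) $C_H$ then $\{X_1,X_2,X_3\cap V(C_H)\}$ is a tripartition of $V(C_H)$ realizing the same tribond and the same triple of bonds, with the reverse lift as for bonds. Thus the tribonds of $G_1$ and of $G_2$ coincide, with matching bond-triples, which finishes Part~(1).

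For Part~(2) I will read $J_0$ off its bases, using Theorem~\ref{T:Bases}: the bases of $J_0(G,\mc L)$ are the maximal forests of $G$ together with the sets $F\cup e_0$, where $F$ is a maximal forest with one edge $e$ removed and the resulting bond $\exterior(\pi_F)$ is un-cobalanced. The maximal forests of $G_1$ and $G_2$ are the bases of $M(G_1)=M(G_2)$, hence coincide. For the second kind, write $F=T\bs e$ with $T$ a maximal forest (common to both graphs) and $e\in T$, say $e\in E(H)$; deleting $e$ splits one tree of $T$ into two pieces, and the piece $P$ avoiding $v$ contains no edge of $K$ (the $K$-edges of that tree form a connected subtree through $v$, so removing the $H$-edge $e$ leaves them on $v$'s side), so $V(P)\subseteq V(H)\setminus\{v\}$ and $\exterior(\pi_F)=\delta(V(P))$ is a bond of $H$ --- literally the same edge set in $G_1$ and $G_2$. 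Since ``un-cobalanced'' means ``not in $\mc L$'' and $\mc L$ is the same set of bonds in both graphs, the two lists of bases agree, so $J_0(G_1,\mc L)=J_0(G_2,\mc L)$, and then $J(G_i,\mc L)=J_0(G_i,\mc L)/e_0$ gives $J(G_1,\mc L)=J(G_2,\mc L)$.

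The main obstacle is the case analysis near $v$ in the bond/tribond step: one must be confident that nothing essential straddles the cut vertex --- in particular that a tripartition of $C$ whose parts $X_1,X_2$ lie on opposite sides of $v$ really fails to be a tribond for lack of an $X_1$--$X_2$ edge, and that $X_3\cap V(C_H)$ stays connected when $X_1,X_2\subseteq V(C_H)$ --- and, in Part~(2), that the tree piece $P$ can always be taken disjoint from $v$ and hence lies wholly in $H$. One could instead argue with cocircuits via Theorem~\ref{T:Cocircuits}, but then dibonds must also be matched; a dibond of $G_1$ whose middle part contains $v$ (the left type in Figure~\ref{F:ModularBonds}) becomes a two-component dibond of $G_2$ (the right type) with the identical edge set, so the cocircuit lists still coincide. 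Working with bases avoids this extra bookkeeping.
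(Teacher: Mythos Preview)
Your argument is correct, and for Part~(1) it is the same as the paper's: both proofs rest on the observation that $G_1$ and $G_2$ have the same bonds and the same tribonds (the paper phrases this as ``no tribond can have edges in more than one block''), from which the linear-class condition transfers immediately. You simply supply more of the verification.

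For Part~(2) you take a slightly different route. The paper argues by comparing cocircuits via Theorem~\ref{T:Cocircuits}, whereas you compare bases via Theorem~\ref{T:Bases}. Both work, and your closing remark pinpoints the trade-off: the cocircuit approach needs the dibonds to match as well, and here a connected dibond of $G_1$ whose middle part contains $v$ becomes a two-component dibond of $G_2$ with the same edge set, so the two dibond types in Figure~\ref{F:ModularBonds} can swap---a wrinkle the paper's one-line proof leaves implicit. Your basis argument trades that check for the (equally routine) verification that the fundamental bond $\exterior(\pi_{T\bs e})$ is literally the same edge set in $G_1$ and $G_2$. Either way the bookkeeping is minor.
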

\begin{proof}
The set of bonds in $G_1$ is exactly the set of bonds in $G_2$; furthermore, the tribonds of $G_1$ are the same as the tribonds of $G_2$ because no tribond can have edges in more than one block of a graph. This proves (1). Part (2) follows from these same facts and comparing cocircuits.
\end{proof}

\subsection{Two simple examples}\label{S:TwoPointCobias}

\begin{example}
Pick vertices $a, b$ in $G$.  Define a bond as cobalanced when it does not separate $a$ and $b$.  Let $\mc L_{a,b}$ denote this set of cobalanced bonds.

\begin{proposition}  $(G,\mc L_{a,b})$ is a cobiased graph.
\end{proposition}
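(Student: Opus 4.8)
The plan is to verify directly the defining condition of a linear class of bonds: that no tribond of $G$ contains exactly two members of $\mc L_{a,b}$. First I would dispose of the degenerate situations. A tribond lives inside a single connected component $C$ of $G$, arising from a tripartition $\{X_1,X_2,X_3\}$ of $V(C)$ with bonds $B_i=\delta(X_i)$. If $a=b$, or if at least one of $a,b$ lies outside $C$, then deleting any $B_i$ leaves $a$ and $b$ in the same component (i.e.\ does not separate them), so every $B_i$ is cobalanced and the tribond contains three members of $\mc L_{a,b}$, which is permitted. Hence I may assume $a\neq b$ and $a,b\in V(C)$.

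The core of the argument is then a short counting observation. The two sides of the cut $B_i=\delta(X_i)$ inside $C$ are $X_i$ and $V(C)\bs X_i=X_j\cup X_k$, where $\{i,j,k\}=\{1,2,3\}$; therefore $B_i$ separates $a$ from $b$ (that is, $B_i\notin\mc L_{a,b}$) precisely when exactly one of $a,b$ lies in $X_i$. Since $a$ lies in exactly one block of the tripartition and $b$ lies in exactly one block, there are two cases. If $a$ and $b$ lie in the same block $X_p$, then no $B_i$ separates them --- for $i=p$ both lie in $X_i$, and for $i\neq p$ neither does --- so all three bonds are cobalanced. If $a$ and $b$ lie in distinct blocks $X_p$ and $X_q$, then $B_p$ and $B_q$ each have exactly one of $a,b$ on the $X_i$-side and hence separate them, while the remaining bond contains neither; thus exactly two of the three bonds are un-cobalanced and exactly one is cobalanced. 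In either case the tribond contains $1$ or $3$ cobalanced bonds, never $2$, which is exactly the linear-class condition, so $(G,\mc L_{a,b})$ is a cobiased graph.

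I do not expect a genuine obstacle here: the statement is in essence the remark that a two-element set $\{a,b\}$ cannot meet all three blocks of a tripartition, so at most two of the three ``block versus complement'' cuts can separate $a$ from $b$. The only points needing care are the bookkeeping about which component a given bond and its ambient tribond occupy, and fixing the convention that a bond of $G$ does not separate $a$ from $b$ when one of them lies outside that bond's component; with these in hand the case analysis above is routine.
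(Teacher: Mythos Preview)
Your proof is correct and follows essentially the same approach as the paper: both arguments check directly that in the tripartition $\{X_1,X_2,X_3\}$ underlying a tribond, either $a$ and $b$ lie in the same part (all three bonds cobalanced) or in different parts (exactly one bond cobalanced). You are more explicit about the degenerate cases where $a=b$ or where $a,b$ lie outside the relevant component, which the paper passes over, but the substance is identical.
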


\begin{proof} Consider a tribond corresponding to a tripartition $\{X,Y,Z\}$ of $V(G)$.  If $a, b$
are in the same part, all three bonds are cobalanced.  If $a\in X$ and $b\in Y$, then the bond $\delta (Z)$ is cobalanced and the other two bonds are not.  That is, an even number of the three are un-cobalanced.  It follows that $\mc L_{a,b}$ is a linear class of bonds.
\end{proof}

The linear class $\mc L_{a,b}$ has a special property:  In every tribond the number of un-cobalanced bonds is even.   This looks like a dual of the characteristic property of biased graphs derived from edge signs (gains in the 2-element group), that in every theta graph the number of unbalanced cycles is even (called ``additive bias'' in \cite{Zaslavsky:BiasedGraphs1}).  By analogy, let us call such a linear class of bonds \emph{additive}.  Signed graphs are especially simple (and important) gain graphs.  That raises the following two questions:  Is there similar importance for additive cobias?  Is there a simple general construction of additively cobiased graphs, dual in some sense to the construction of additively biased graphs from signed graphs?  Example \ref{X:Nonseparating} shows that $\mathbb{Z}_2$ does in fact relate to the class $\mc L_{a,b}$, which is a step in the direction of answers.
\end{example}

\begin{example}\label{X:NonseparatingW}
More generally let $W \subseteq V(G)$ have even cardinality, define a bond to be \emph{cobalanced} if it separates $W$ into two even subsets, and let ${\mc L}_W^+$ be the linear class of all such bonds.  Then ${\mc L}_W^+$ is also additive.
\end{example}

Both examples are forms of quotient labeled cobias; see Example \ref{X:Nonseparating}.

\section{Gain graphs and linear classes of bonds}

An \emph{oriented edge} in a graph is an edge $e$ along with a chosen direction along that edge. If $e$ is an oriented edge, then the reverse orientation is denoted by $-e$ when using additive notation and by $e\inv$ when using multiplicative notation. (We will be using additive notation except in Sections \ref{S:Planar} and \ref{S:PlanarNotRealizable}.) The set of all possible oriented edges in $G$ is denoted by $\vec E(G)$.
An \emph{oriented bond} $\vec B$ is a bond $B=\delta(X)$ along with a choice of orientation for the edges of $B$, either all away from $X$ or all towards $X$. The reverse orientation of $\vec B$ is denoted by $-\vec B$ when using additive notation and $\vec B^{-1}$ when using multiplicative notation.

Let $\Gamma$ be a group. A $\Gamma$-\emph{gain graph} is a pair $(G,\vp)$ in which $G$ is a graph and $\vp\colon E(G)\to\Gamma$ is a mapping such that $\vp(-e)=-\vp(e)$ for an additive group and $\vp(e\inv)=\vp(e)\inv$ for a multiplicative group. (All our additive groups are abelian.  Our multiplicative groups are not assumed to be abelian.) The function $\vp$ is called a $\Gamma$-\emph{gain function}.  Gain graphs give rise to biased graphs (see \cite{Zaslavsky:BiasedGraphs1}).  Similarly, they give rise to cobiased graphs, though not always.

\subsection{Cobiased graphs from additive gain graphs}

If $\Gamma$ is an additive group and $(G,\vp)$ is a $\Gamma$-gain graph, then for each oriented bond $\vec B$, define $\vp(\vec B)=\sum_{e\in\vec B}\vp(e)$. Say that the bond $B$ is \emph{cobalanced} when $\vp(\vec B)=-\vp(-\vec B)=0$. Let $\mc L_\vp$ be the set of cobalanced bonds of $(G,\vp)$.

\begin{proposition} \label{P:Gains1}
If $\Gamma$ is an additive group and $(G,\vp)$ is a $\Gamma$-gain graph, then $(G,\mc L_\vp)$ is a cobiased graph.
\end{proposition}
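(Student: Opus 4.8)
The plan is to verify directly that $\mc L_\vp$ satisfies the defining property of a linear class of bonds: in every tribond the number of bonds lying in $\mc L_\vp$ is zero, one, or three, never exactly two. So fix a tribond $T = B_1 \cup B_2 \cup B_3$ corresponding to a tripartition $\{X_1, X_2, X_3\}$ of the vertex set of one component of $G$, where $B_i = \delta(X_i)$. The key observation is an additivity relation among the three bond-gains once the bonds are consistently oriented.

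First I would orient each $B_i$ away from $X_i$, giving $\vec B_i$, and compute $\vp(\vec B_i) = \sum_{e \in \vec B_i} \vp(e)$. Each edge of $T$ joins two of the three parts, say $X_i$ to $X_j$; such an edge appears in exactly $\vec B_i$ and $\vec B_j$, and its contributions there are oriented oppositely (away from $X_i$ in one, away from $X_j$ — hence toward $X_i$ — in the other), so it contributes $\vp(e)$ to one of $\vp(\vec B_i), \vp(\vec B_j)$ and $-\vp(e)$ to the other. Summing, $\vp(\vec B_1) + \vp(\vec B_2) + \vp(\vec B_3) = 0$ in $\Gamma$. (This is where I would be slightly careful: the sum is over the abelian group $\Gamma$, which is why the hypothesis that additive groups are abelian matters; each edge's two contributions cancel regardless of the order of summation.)

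Now $B_i \in \mc L_\vp$ exactly when $\vp(\vec B_i) = 0$. From $\vp(\vec B_1) + \vp(\vec B_2) + \vp(\vec B_3) = 0$, if two of the three gains are $0$ then so is the third, so we cannot have exactly two of the $B_i$ cobalanced — the remaining one is forced to be cobalanced as well. Hence the number of cobalanced bonds in $T$ is $0$, $1$, or $3$, which is precisely the linear-class condition. Therefore $(G, \mc L_\vp)$ is a cobiased graph.

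I do not expect a serious obstacle here; the only thing to watch is bookkeeping with orientations to make sure the cancellation $\vp(\vec B_i) + \vp(\vec B_j)$ contribution of a cross edge is genuinely $\vp(e) + (-\vp(e)) = 0$, using $\vp(-e) = -\vp(e)$. One should also note that cobalance is well-defined independent of the chosen orientation of a bond, since reversing $\vec B$ negates $\vp(\vec B)$ and $0 = -0$; this is already implicit in the definition ``$\vp(\vec B) = -\vp(-\vec B) = 0$'' but is worth stating so that the tribond argument, which picks one orientation per bond, is legitimate.
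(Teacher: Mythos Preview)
Your proof is correct and follows essentially the same route as the paper: both arguments orient the three bonds of a tribond consistently and use the resulting additive identity among the three bond-gains to rule out the case of exactly two cobalanced bonds. The only cosmetic difference is that the paper orients $\vec B_3$ toward $X_3$ and writes the identity as $\vp(\vec B_3)=\vp(\vec B_1)+\vp(\vec B_2)$, whereas you orient all three outward and write it as $\vp(\vec B_1)+\vp(\vec B_2)+\vp(\vec B_3)=0$; your version is slightly more detailed in justifying the edge-by-edge cancellation.
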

\begin{proof}
Consider a tribond containing the three bonds $\delta(X_1)$, $\delta(X_2)$, $\delta(X_3)$ and assume without loss of generality that $\delta(X_1),\delta(X_2)\in\mc L_\vp$. Let $\vec B_1$ and $\vec B_2$ be the oriented bonds obtained from $\delta(X_1)$ and $\delta(X_2)$ by orienting all edges away from $X_1$ and $X_2$. Let $\vec B_3$ be the oriented bond obtained from $\delta(X_3)$ by orienting all of its edges towards $X_3$. Now $\vp(\vec B_3)=\vp(\vec B_1)+\vp(\vec B_2)=0$, which implies that a tribond cannot have exactly two bonds in $\mc L_\vp$, which is our result.
\end{proof}

\subsection{Cobiased planar graphs using gains over arbitrary groups} \label{S:Planar}

Let $\Gamma$ be a multiplicative group (not necessarily abelian), let $G$ be a connected graph embedded in the plane, and let $(G,\vp)$ be a $\Gamma$-gain graph. Consider an oriented bond $\vec B$ in $G$. The oriented edges of $\vec B$ correspond to a closed walk in the topological dual graph $G\dual$. Thus there is a well-defined cyclic ordering of the edges of $\vec B$ up to a choice of a starting edge and clockwise or counterclockwise direction. So, given such an oriented bond $\vec B$, let $e_1,\ldots,e_k$ be a cyclic ordering with $e_1$ as the starting edge. Define $\vp(\vec B)=\vp(e_1)\cdots\vp(e_k)$. Note that any other choice of starting edge yields a product $\vp(e_i)\cdots\vp(e_k)\vp(e_1)\cdots\vp(e_{i-1})$, which is conjugate to $\vp(e_1)\cdots\vp(e_k)$ in the group $\Gamma$. Furthermore, a different choice of direction yields a product that is the inverse of the original. Therefore, $\vp(\vec B)=1$ for any one choice of starting edge and direction if and only if $\vp(\vec B)=1$ for all possible choices of starting edge and direction. Say that a bond $B$ is \emph{cobalanced} when $\vp(\vec B)=1$ for some choice of starting edge and direction and let $\mc L_\vp$ be the set of cobalanced bonds given by $\vp$.

\begin{proposition} \label{P:Gains2}
If $G$ is a graph embedded in the plane, $\Gamma$ is a multiplicative group, and $(G,\vp)$ is a $\Gamma$-gain graph, then $(G,\mc L_\vp)$ is a cobiased graph.
\end{proposition}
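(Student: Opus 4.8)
The plan is to mimic the proof of Proposition \ref{P:Gains1}, showing that a tribond cannot contain exactly two cobalanced bonds, but now working with the multiplicative (non-abelian) gain values read off cyclically in the dual graph. The key observation is that the planar tribond has a clean dual picture: if $\{X_1,X_2,X_3\}$ is the defining tripartition, then in the topological dual $G\dual$ the three bonds $\delta(X_1)$, $\delta(X_2)$, $\delta(X_3)$ correspond to three closed walks, and because $G$ is planar these three walks fit together around a common configuration so that, with consistent orientations, the dual closed walk for $\delta(X_3)$ (say) is the concatenation of the dual walks for $\delta(X_1)$ and $\delta(X_2)$. Concretely, orienting all edges away from $X_1$ and away from $X_2$, and all edges towards $X_3$, the dual walk for $\vec B_3$ decomposes as the dual walk for $\vec B_1$ followed by the dual walk for $\vec B_2$, \emph{up to a choice of starting point on each}.

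First I would set up the dual picture carefully: fix the planar embedding, pass to $G\dual$, and identify each oriented bond $\vec B_i$ with its closed walk $C_i$ in $G\dual$. The edges of $\vec B_1$, $\vec B_2$, $\vec B_3$ partition into the three ``pairwise'' edge sets $\delta(X_1,X_2)$, $\delta(X_1,X_3)$, $\delta(X_2,X_3)$ (the tribond has a nonempty set of edges between each pair), and each $C_i$ traverses exactly the two of these sets incident to $X_i$. Next I would argue that one can choose starting edges so that $C_3$ literally equals $C_1$ followed by $C_2$ as walks in $G\dual$: geometrically the three ``corner'' regions meet at dual structure that lets the $X_1$-loop and the $X_2$-loop be spliced into the $X_3$-loop. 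Then $\vp(\vec B_3) = \vp(C_1)\vp(C_2) = \vp(\vec B_1)\,\vp(\vec B_2)$ for these particular starting edges; since cobalance of a bond does not depend on the choice of starting edge or direction (as established just before the proposition), if $\delta(X_1)$ and $\delta(X_2)$ are cobalanced then $\vp(\vec B_1) = \vp(\vec B_2) = 1$, hence $\vp(\vec B_3) = 1$ and $\delta(X_3)$ is cobalanced. Thus a tribond with two cobalanced bonds has all three cobalanced, so $\mc L_\vp$ is a linear class and $(G,\mc L_\vp)$ is a cobiased graph.

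The main obstacle is the splicing claim — that the three dual closed walks can be oriented and based so that $C_3 = C_1 C_2$ exactly (not merely up to conjugacy on each factor, which would not suffice since the group is non-abelian). This needs a genuine topological argument. I would handle it by contracting each connected induced subgraph $G[X_i]$ to a point, reducing to the case where $G$ is a planar graph on three vertices $x_1,x_2,x_3$ with three parallel classes of edges between them; in the dual, the three vertices become the three faces, and the cyclic structure around each face visibly realizes the concatenation. One must check that contracting $G[X_i]$ (which is connected, hence its contraction respects the embedding) changes neither the bonds nor the gain products $\vp(\vec B_i)$ — the latter because contracting edges not in any $\vec B_i$ does not alter the dual closed walks of the $\vec B_i$. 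A secondary point to verify is that the orientation conventions (away from $X_1$, away from $X_2$, towards $X_3$) are exactly the ones that make the concatenation work with no extra inverses; getting the signs right is routine but must be done explicitly.
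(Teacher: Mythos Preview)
Your proposal is correct and follows essentially the same approach as the paper: mimic the proof of Proposition~\ref{P:Gains1}, with the added care of choosing starting edges and directions in each bond so that the three gain products are compatible. The paper's own proof is a one-sentence sketch stating exactly this; your write-up supplies the details (the dual theta configuration and the splicing/contract-to-three-vertices argument) that the paper leaves implicit.
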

\begin{proof}
This proof is similar to the one for Proposition \ref{P:Gains1}, but with the added concern of picking starting edges and directions for each bond in a tribond to match with the others.
\end{proof}

\subsection{An example not realizable by gains}  \label{S:PlanarNotRealizable}

The example here is essentially the topological dual of \cite[Example 5.8]{Zaslavsky:BiasedGraphs1}. Consider the labeled graph $G\cong K_{2,4}$ shown on the left in Figure \ref{F:NonGroupExample} with all edges oriented in the downward direction. Let $\mc L=\{a_1a_2a_3a_4,a_1a_2b_3b_4,b_1b_2a_3a_4\}$. Up to reembedding of $G$, a tribond in $G$ is of one of the two types shown on the right of Figure \ref{F:NonGroupExample}.  Note that any such tribond contains at most one bond from $\mc L$. Thus $(G,\mc L)$ is a cobiased graph.

\begin{figure}[H]
\begin{center}
\includegraphics[page=4,scale=.8]{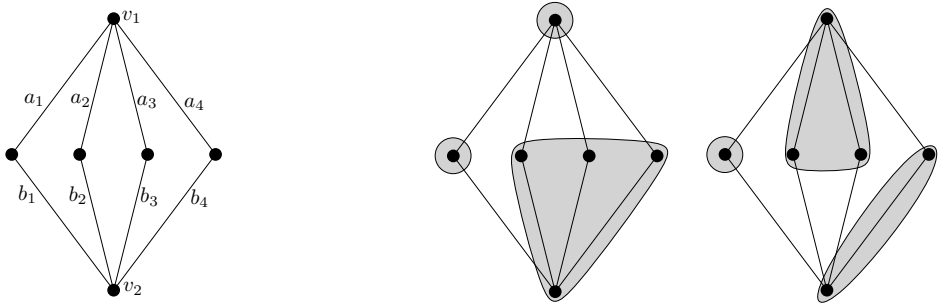}
\end{center}
\caption{All edges are oriented in the downward direction. Every tribond is of one of the two types shown.}\label{F:NonGroupExample}
\end{figure}

By way of contradiction, assume that there is a multiplicative gain function $\vp$ for which $\mc L=\mc L_\vp$. For simplicity, let us denote $\vp(a_i)$ and $\vp(b_i)$ by $a_i$ and $b_i$. Therefore $a_1a_2a_3a_4=1$, $a_1a_2b_3b_4=1$, and $b_1b_2a_3a_4=1$. Thus
\[1=(a_1a_2b_3b_4)\inv a_1a_2a_3a_4(b_1b_2a_3a_4)\inv=(b_1b_2b_3b_4)\inv,\]
which implies that $b_1b_2b_3b_4\in\mc L$, a contradiction.

\subsection{Cycle shifting}

Let $(G,\vp)$ be a $\Gamma$-gain graph in which $\Gamma$ is an additive group. Let $C$ be a cycle in $G$ with oriented edges $e_1,\ldots,e_k$ in cyclic order in one direction around $C$. For any $a\in\Gamma$, let $\psi_{{C,a}}$ be the $\Gamma$-gain function on $G$ for which $\vp(e_i)=a$ and $\vp(e)=0$ for all $e$ such that $\pm e\notin\{e_1,\ldots,e_k\}$. Now for any oriented bond $\vec B$, $\vp(\vec B)=(\vp+\psi_{{C,a}})(\vec B)$. Thus $\mc L_\vp=\mc L_{\vp+\psi_{{C,a}}}$. We call this operation \emph{shifting}. We say that two $\Gamma$-gain functions $\vp_1$ and $\vp_2$ (or two $\Gamma$-gain graphs $(G,\vp_1)$ and $(G,\vp_2)$) are \emph{shifting equivalent} when $\vp_1$ is obtained from $\vp_2$ via a sequence of shifts. Note that this relation is an equivalence relation.

\begin{theorem} \label{T:ShiftingEquivalent}
If $\Gamma$ is an additive group and $(G,\vp)$ and $(G,\psi)$ are $\Gamma$-gain graphs, then $(G,\vp)$ and $(G,\psi)$ are shifting equivalent if and only if $\vp(\vec B)=\psi(\vec B)$ for all oriented bonds $\vec B$ in $G$. \end{theorem}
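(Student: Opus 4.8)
The plan is to prove the two implications separately; the forward implication is essentially contained in the Cycle shifting discussion above, so the substance lies in the reverse implication, which I will handle by a spanning-forest reduction together with fundamental bonds.

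\emph{Forward direction.} Suppose first that $(G,\psi)=(G,\vp+\psi_{C,a})$ arises from $(G,\vp)$ by a single shift. For any oriented bond $\vec B=\delta(X)$, traversing the cycle $C$ crosses the edge cut $\delta(X)$ equally often from $X$ to its complement as in the reverse direction, so the $\pm a$ contributions of $\psi_{C,a}$ along $\vec B$ cancel and $\psi_{C,a}(\vec B)=0$; hence $\vp(\vec B)=\psi(\vec B)$, which is exactly the identity noted in the discussion of shifting preceding the theorem. Applying it along a sequence of shifts shows that if $(G,\vp)$ and $(G,\psi)$ are shifting equivalent then $\vp(\vec B)=\psi(\vec B)$ for every oriented bond $\vec B$.

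\emph{Reverse direction.} Assume $\vp(\vec B)=\psi(\vec B)$ for every oriented bond and set $\eta=\vp-\psi$, a $\Gamma$-gain function with $\eta(\vec B)=0$ for all oriented bonds $\vec B$. Fix a maximal forest $T$ of $G$; note $E(T)$ consists of links only. For each edge $e\notin T$, let $C_e$ be the unique cycle in $T\cup e$ if $e$ is a link, or the loop $\{e\}$ if $e$ is a loop. The crucial observation is that $C_e$ contains no edge outside $T$ other than $e$ itself, so for a suitable $a_e\in\Gamma$ and orientation of $C_e$ the shift $\eta\mapsto\eta+\psi_{C_e,a_e}$ changes the value of $\eta$ on $e$ to $0$ while leaving unchanged the value on every other edge outside $T$. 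Carrying these shifts out successively for all $e\notin T$ — which do not interfere, since distinct fundamental cycles share no non-tree edge — yields $\eta'=\eta+\sum_{e\notin T}\psi_{C_e,a_e}$ supported on $E(T)$.

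It remains to see $\eta'\equiv 0$. Let $e'\in E(T)$; removing $e'$ from $T$ splits it into two subtrees spanning vertex sets $X_1$ and $X_2$. Each $G[X_i]$ contains a spanning tree of $X_i$ and hence is connected, so $B_{e'}=\delta(X_1)$ is a bond, and $B_{e'}\cap E(T)=\{e'\}$ since every other edge of $T$ has both endpoints in $X_1$ or both in $X_2$. Because $\eta'$ vanishes off $T$, the value $\eta'(\vec B_{e'})$ for either orientation of $B_{e'}$ equals $\pm\eta'(e')$; but $\eta'(\vec B_{e'})=\eta(\vec B_{e'})+\sum_{e\notin T}\psi_{C_e,a_e}(\vec B_{e'})=0$ by hypothesis and the forward-direction computation. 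Hence $\eta'(e')=0$ for all $e'\in E(T)$, so $\eta'\equiv 0$, which rearranges to $\eta=\sum_{e\notin T}\psi_{C_e,-a_e}$ and therefore $\vp=\psi+\sum_{e\notin T}\psi_{C_e,-a_e}$ is obtained from $\psi$ by a sequence of shifts. I expect the only points needing genuine care to be in the third paragraph: that the fundamental-cycle shifts can be performed edge by edge without undoing one another, and that the fundamental coboundary of a tree edge really is a bond; everything else is routine.
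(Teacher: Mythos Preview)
Your proof is correct and follows essentially the same route as the paper: both arguments $T$-normalize via fundamental-cycle shifts and then read off values on tree edges from fundamental bonds. The only cosmetic difference is that the paper packages the normalization step as a separate proposition (existence and uniqueness of a $T$-normalized representative) and applies it to $\vp$ and $\psi$ individually, whereas you work directly with the difference $\eta=\vp-\psi$.
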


In order to prove Theorem \ref{T:ShiftingEquivalent}, we need a concept of normalizing gains. So if $(G,\vp)$ is a $\Gamma$-gain graph and $T$ is a maximal forest in $G$, then we say that $\vp$ is $T$-\emph{normalized} when $\vp$ is zero on each edge of $G$ not in $T$.

\begin{proposition} \label{P:Normalizing}
Let $\Gamma$ be an additive group, $(G,\vp)$ a $\Gamma$-gain graph, and $T$ a maximal forest of\/ $G$.  Then there is a unique $T$-normalized $\Gamma$-gain function $\vp_T$ that is shifting equivalent to $\vp$.
\end{proposition}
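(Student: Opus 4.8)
The plan is to establish existence and uniqueness separately. For existence I would shift the gain off of each edge outside $T$, one at a time, along its fundamental cycle; for uniqueness I would use the fundamental bonds of the edges of $T$ together with the fact, already noted above, that a single shift does not change $\vp(\vec B)$ for any oriented bond $\vec B$. It is convenient first to fix a reference orientation of every edge of $G$, so that a $\Gamma$-gain function is recorded by its values on these oriented edges; nothing below depends on the choices.

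For existence, let $f$ be an edge of $G$ not in $T$. Since $T$ is a maximal forest, the two endpoints of $f$ lie in a common tree of $T$, so $T\cup f$ contains a unique cycle $C_f$, whose only edge outside $T$ is $f$ itself (when $f$ is a loop, $C_f=\{f\}$). Orient $C_f$ so that it runs through $f$ in the reference direction of $f$, and shift along $C_f$ by the amount $-\vp(f)$. Carrying this out for every $f\notin T$, in any order, is a legitimate sequence of shifts whose net effect is to replace $\vp$ by $\vp_T:=\vp+\sum_{f\notin T}\psi_{C_f,-\vp(f)}$, so $\vp_T$ is shifting equivalent to $\vp$. Distinct fundamental cycles share no edge outside $T$, so for a fixed $f\notin T$ the only summand that is nonzero on $f$ is $\psi_{C_f,-\vp(f)}$, which equals $-\vp(f)$ there; hence $\vp_T(f)=0$ for all $f\notin T$, i.e., $\vp_T$ is $T$-normalized.

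For uniqueness, suppose $\psi$ and $\psi'$ are both $T$-normalized and shifting equivalent. Each shift fixes the gain of every oriented bond (a cycle meets a bond in an even set of edges, on which the contributions $\pm a$ cancel, as recorded above), so $\psi(\vec B)=\psi'(\vec B)$ for every oriented bond $\vec B$ of $G$. Let $e$ be an edge of $T$; deleting $e$ from $T$ splits its tree into two pieces, and if $X$ is the vertex set of one of them then $B_e:=\delta(X)$ is a bond meeting $T$ only in $e$, since a second edge of $T$ in $B_e$ would reconnect the two pieces. Orient $B_e$ with all edges directed away from $X$. Because $\psi$ and $\psi'$ vanish on all edges of $B_e$ other than $e$, the values $\psi(e)$ and $\psi'(e)$ are read off from $\psi(\vec B_e)$ and $\psi'(\vec B_e)$ with the same sign — the one determined by whether the reference orientation of $e$ agrees with $\vec B_e$ — so $\psi(e)=\psi'(e)$. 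Since both functions are also $0$ off $T$, we conclude $\psi=\psi'$.

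The substance of the argument is orientation and sign bookkeeping: matching the cyclic orientation of each $C_f$ and the orientation of each $B_e$ to the fixed reference orientations so that the shift amounts and the signs above are consistent. The one pitfall to avoid is circularity, since this proposition is used to prove Theorem \ref{T:ShiftingEquivalent}; in particular the shift-invariance of bond gains must be taken from the direct observation above rather than from that theorem. Beyond this I expect no real obstacle.
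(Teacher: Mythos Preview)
Your proof is correct and follows essentially the same route as the paper: existence via shifting along the fundamental cycles $C_f$ of the non-tree edges, and uniqueness via the fundamental bonds $B_e$ of the tree edges together with the shift-invariance of bond gains. Your explicit remark about avoiding circularity with Theorem~\ref{T:ShiftingEquivalent} is apt; the paper likewise relies only on the directly verified invariance $\vp(\vec B)=(\vp+\psi_{C,a})(\vec B)$ stated just before that theorem.
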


\begin{proof}
If $e$ is an edge outside $T$, let $C(e)$ be the fundamental cycle in $T\cup e$. Now $\vp_T = \vp-\sum_{e\notin T}\psi_{{C(e),\vp(e)}}$ is a $\Gamma$-gain function that is shifting equivalent to $\vp$ and is zero outside $T$.

We prove that $\vp_T$ is uniquely determined. Let $\psi$ be any $T$-normalized $\Gamma$-gain function that, like $\vp_T$, is shifting equivalent to $\vp$. If $e$ is in the tree $T_1$ of $T$, then $T_1-e$ consists of two trees connected by a bond $B$ in which $e$ is the only edge of $T$, thus the only edge in $B$ for which $\psi$ may be nonzero. Orienting $e$ and $B$ compatibly, we have
\begin{equation}\label{E:Tnormalized}
\psi(e) = \psi(\vec B) = \vp(\vec B).
\end{equation}
In particular, $\vp_T(e) = \vp(\vec B) = \psi(e)$ for $e$ in $T$.  Thus, $\psi = \vp_T$.
\end{proof}

\begin{proof}[Proof of Theorem \ref{T:ShiftingEquivalent}]
If $\vp$ and $\psi$ are shifting equivalent, then $\vp(\vec B)=\psi(\vec B)$ for all oriented bonds $\vec B$ in $G$ from the definition of shifting. Conversely, assume that $\vp(\vec B)=\psi(\vec B)$ for all oriented bonds $\vec B$ in $G$. Let $T$ be a maximal forest in $G$.
Then $\psi_T=\psi=\vp=\vp_T$ on bonds, which implies  $\psi_T(e) = \vp_T(e)$ for all edges by equation \eqref{E:Tnormalized}.  It follows that $\psi$ is shifting equivalent to $\vp$.
\end{proof}

\subsection{Cobiased graphs from vertex labelings}

Let $G$ be a graph and let $\Gamma$ be an additive group. Consider a vertex labeling $\pi\colon V(G)\to\Gamma$. If $H$ is a subgraph of $G$, we write $\pi(H)=\sum_{v\in V(H)}\pi(v)$. Call $\pi$ a $\Gamma$-\emph{quotient labeling} when for each connected component $H$ of $G$, $\pi(H)=0$. Now if $B=\delta(X)$ is a bond of $G$ and $\vec B$ is an orientation of $B$ towards $X$, then define $\pi(\vec B)=\pi(X)$ and say that $B$ is cobalanced when $\pi(\vec B)=0$. Let $\mc L_\pi$ be the set of cobalanced bonds relative to $\pi$.

Such $\Gamma$-quotient labelings were used by Recski \cite{Recski1,Recski2}, for $\Gamma$ equal to the additive group of a field, to characterize vector representations of single-element extensions and elementary quotients of graphic matroids over fields as well as defining some more general extension and elementary-quotient constructions for graphic matroids.

\begin{proposition} \label{P:QuotientLabeling}
Let $\Gamma$ be an additive group and $\pi$ a $\Gamma$-quotient labeling of a graph $G$. Then $(G,\mc L_\pi)$ is a cobiased graph.
\end{proposition}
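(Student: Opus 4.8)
The plan is to verify directly that $\mc L_\pi$ satisfies the defining condition of a linear class of bonds: every tribond of $G$ contains $0$, $1$, or $3$ cobalanced bonds, never exactly $2$. Recall that a tribond arises from a tripartition $\{X_1,X_2,X_3\}$ of the vertex set of a single connected component $H$ of $G$, with bonds $B_i=\delta(X_i)$ for $i=1,2,3$, where each $G[X_i]$ is connected and there is an edge between each pair of parts. So I fix such a tribond and count cobalanced bonds among $B_1,B_2,B_3$.

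First I would record how $\pi$ detects cobalance of $B_i$. Orienting $B_i=\delta(X_i)$ toward $X_i$ gives $\pi(\vec B_i)=\pi(X_i)$, so $B_i$ is cobalanced exactly when $\pi(X_i)=0$. I should check this does not depend on which side of the bond is singled out: the two sides of $B_1$ are $X_1$ and $X_2\cup X_3$, the latter inducing a connected subgraph because $G[X_2]$ and $G[X_3]$ are connected and joined by an edge; since $\pi$ sums to $0$ on $H$ we have $\pi(X_1)=-\pi(X_2\cup X_3)$, so $\pi(X_1)=0$ iff $\pi(X_2\cup X_3)=0$, and cobalance is well defined. Then comes the key step: because $\pi$ is a $\Gamma$-quotient labeling and $H$ is a connected component of $G$,
\[
\pi(X_1)+\pi(X_2)+\pi(X_3)=\pi(V(H))=\pi(H)=0 .
\]
Hence if two of $\pi(X_1),\pi(X_2),\pi(X_3)$ vanish, so does the third; equivalently the number of indices $i$ with $\pi(X_i)=0$ is $0$, $1$, or $3$, never $2$. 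Translating back through the previous paragraph, the number of cobalanced bonds among $B_1,B_2,B_3$ is $0$, $1$, or $3$. Therefore $\mc L_\pi$ is a linear class of bonds and $(G,\mc L_\pi)$ is a cobiased graph.

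There is no serious obstacle here; this is essentially the argument used for the two examples of Section~\ref{S:TwoPointCobias}, which are special cases of quotient-labeled cobias. The only points needing a moment's care are the orientation/side bookkeeping used to see that cobalance of $B_i$ is equivalent to $\pi(X_i)=0$ regardless of the labeling of the two sides, and the observation that the tripartition defining a tribond lies within a single connected component, so that the quotient-labeling condition $\pi(H)=0$ is available. One could instead try to produce a $\Gamma$-gain function $\vp$ with $\mc L_\vp=\mc L_\pi$ and invoke Proposition~\ref{P:Gains1}, but exhibiting such a $\vp$ requires an auxiliary choice (such as a maximal forest) together with an extra verification, so the direct route above is cleaner.
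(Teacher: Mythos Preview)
Your proof is correct and follows essentially the same approach as the paper: both arguments reduce to the identity $\pi(X_1)+\pi(X_2)+\pi(X_3)=0$ on a tribond, from which it is immediate that two cobalanced bonds force the third to be cobalanced. The paper's version is terser (it assumes two bonds are cobalanced and computes the third), while you spell out the well-definedness check and the role of the connected component $H$, but the substance is the same.
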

\begin{proof}
Consider a tribond containing the three bonds $\delta(X_1)$, $\delta(X_2)$, $\delta(X_3)$ and assume without loss of generality that $\delta(X_1),\delta(X_2)\in\mc L_\pi$. Let $\vec B_1$ and $\vec B_2$ be the oriented bonds obtained from $\delta(X_1)$ and $\delta(X_2)$ by orienting all edges away from $X_1$ and $X_2$. Let $\vec B_3$ be the oriented bond obtained from $\delta(X_3)$ by orienting all of its edges towards $X_3$. Now
\[\pi(\vec B_3)=\pi(X_3)=-\pi(X_1\cup X_2)=-\pi(X_1)-\pi(X_2)=\pi(\vec B_1)+\pi(\vec B_2)=0,\]
which implies that a tribond cannot have exactly two bonds in $\mc L_\pi$, which implies our result.
\end{proof}

It might seem that quotient labelings are different than gains; however, they are actually equivalent constructions. Theorem  \ref{T:GainsFromLabelings} describes how to get gains from a quotient labeling and Theorem \ref{T:GainsToLabelings} describes how to get a quotient labeling from gains.

Given a $\Gamma$-quotient labeling $\pi$ of $G$ and a maximal forest $T$ in $G$, define a $T$-normalized $\Gamma$-gain function $\vp_{\pi,T}$ as follows. For each oriented edge $e$ not in $T$, let $\vp_{\pi,T}(e)=0$. For an edge $e$ in $T$, let $B=\delta(X)$ be the bond $\exterior(\pi_{T\bs e})$ and say that $\vec B$ is the orientation of $B$ directed towards $X$. Orient $e$ towards $X$ as well. Now set $\vp_{\pi,T}(e)=\pi(X)$.

\begin{theorem} \label{T:GainsFromLabelings}
Let $\Gamma$ be an additive group and $\pi$ a $\Gamma$-quotient labeling of graph $G$. If $T$ is a maximal forest of $G$, then for every oriented bond $\vec B$ in $G$, $\vp_{\pi,T}(\vec B)=\pi(\vec B)$.
\end{theorem}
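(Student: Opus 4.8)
The plan is to prove the slightly stronger statement that $\vp_{\pi,T}(\vec\delta(X))=\pi(X)$ for \emph{every} $X\subseteq V(G)$, where $\vec\delta(X)$ denotes the coboundary $\delta(X)$ with all its edges oriented toward $X$; the theorem is then the special case $\delta(X)=B$, after matching the orientation convention $\pi(\vec B)=\pi(X)$. The argument has two essentially independent ingredients: an additivity property that reduces everything to one vertex at a time, and a direct one-vertex computation that uses the quotient-labeling hypothesis that $\pi(V(G_0))=0$ on each component $G_0$ of $G$; there is also one preliminary normalization remark.

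First I would record the only features of $\vp_{\pi,T}$ that get used. It is $T$-normalized, so $\vp_{\pi,T}(\vec e)=0$ whenever $\pm e\notin T$. For a tree edge $e$ lying in the tree component $T_0$ of $T$ with vertex set $V_0=V(G_0)$, its two fundamental-bond sides $U$ and $W=V_0\setminus U$ satisfy $\pi(W)=\pi(V_0)-\pi(U)=-\pi(U)$; hence the value attached to $e$ in the definition of $\vp_{\pi,T}$ does not depend on which side is named $X$, and for either orientation $\vec e$ of $e$ one has $\vp_{\pi,T}(\vec e)=\pi(U)$, where $U$ is the side of $T_0-e$ that $\vec e$ points into.

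Then I would do the one-vertex computation. For a vertex $v$, the only edges contributing to $\sum_{\vec e\in\vec\delta(\{v\})}\vp_{\pi,T}(\vec e)$ are the tree edges $f_1,\dots,f_d$ at $v$ (with $d=\deg_{T_0}v$), and by the remark the $f_j$-term, with $f_j$ oriented toward $v$, equals $\pi(U_j)$, where $U_j\ni v$ is the side of $T_0-f_j$ containing $v$. Deleting $f_1,\dots,f_d$ all at once isolates $v$ and breaks $T_0$ into $v$ together with subtrees $W_1,\dots,W_d$, so $V_0=\{v\}\sqcup W_1\sqcup\cdots\sqcup W_d$ and $W_j=V_0\setminus U_j$; therefore $\sum_j\vp_{\pi,T}(f_j\text{ toward }v)=\sum_j\pi(U_j)=\sum_j\big(\pi(V_0)-\pi(W_j)\big)=-\pi(V_0\setminus\{v\})=\pi(v)$, using $\pi(V_0)=0$ twice. (When $v$ is isolated, $d=0$ and both sides vanish.)

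Finally I would conclude by additivity. For disjoint $X_1,X_2\subseteq V(G)$ one has $\delta(X_1\cup X_2)=\delta(X_1)\triangle\delta(X_2)$, the edges lying in both $\delta(X_1)$ and $\delta(X_2)$ being exactly those running between $X_1$ and $X_2$; such an edge is oriented oppositely in $\vec\delta(X_1)$ and in $\vec\delta(X_2)$, so its two contributions cancel by $\vp_{\pi,T}(-e)=-\vp_{\pi,T}(e)$, while every edge of $\vec\delta(X_1\cup X_2)$ appears in exactly one of $\vec\delta(X_1),\vec\delta(X_2)$ with the same orientation. Hence $\vp_{\pi,T}(\vec\delta(X_1\cup X_2))=\vp_{\pi,T}(\vec\delta(X_1))+\vp_{\pi,T}(\vec\delta(X_2))$, and likewise $\pi(X_1\cup X_2)=\pi(X_1)+\pi(X_2)$; inducting on $|X|$ from the one-vertex case yields $\vp_{\pi,T}(\vec\delta(X))=\pi(X)$ for all $X$, and taking $X$ with $\delta(X)=B$ finishes the proof. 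The step demanding the most care is the sign bookkeeping in the one-vertex computation — correctly identifying, for each $f_j$, which side of $T_0-f_j$ the chosen orientation enters — and that is precisely where the quotient-labeling condition is used; the remaining pieces are the cancellation lemma for coboundaries and the standard fact about deleting a vertex's edges from a tree.
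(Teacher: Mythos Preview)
Your proof is correct and takes a genuinely different route from the paper's. The paper proves Theorem~\ref{T:GainsFromLabelings} by first establishing Proposition~\ref{P:GainsFromLabelings}, that $\vp_{\pi,T}$ and $\vp_{\pi,T'}$ are shifting equivalent for any two maximal forests $T,T'$ (via an edge-exchange argument on spanning trees), and then observing that when $|B\cap T|=1$ the identity holds by definition, so for general $B$ one passes to a forest $T'$ with $|B\cap T'|=1$ and transfers the result back via shifting equivalence. Your argument bypasses Proposition~\ref{P:GainsFromLabelings} entirely: by strengthening the claim to all coboundaries $\delta(X)$ you gain additivity in $X$, which reduces the whole theorem to the single-vertex case, and that case is settled by a direct tree computation using only $\pi(V_0)=0$. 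Your approach is more elementary and self-contained; the paper's approach has the compensating advantage that it simultaneously proves the independence of $\vp_{\pi,T}$ from $T$ up to shifting, a fact of independent interest that your argument does not directly yield (though it follows \emph{a posteriori} from your conclusion together with Theorem~\ref{T:ShiftingEquivalent}).
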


Proposition \ref{P:GainsFromLabelings} is necessary for the proof of Theorem \ref{T:GainsFromLabelings}.

\begin{proposition} \label{P:GainsFromLabelings}
Let $\Gamma$ be an additive group and $\pi$ a $\Gamma$-quotient labeling of graph $G$. If $T$ and $T'$ are maximal forests in $G$, then $\vp_{\pi,T}$ and $\vp_{\pi,T'}$ are shifting equivalent.
\end{proposition}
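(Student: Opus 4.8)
The plan is to reduce to the case in which $T$ and $T'$ differ by a single exchange and then display $\vp_{\pi,T'}$ as a one-step shift of $\vp_{\pi,T}$. Maximal forests of $G$ are exactly the bases of the cycle matroid $M(G)$, and the basis exchange graph of a matroid is connected (see, e.g., \cite{Oxley:2ndEdition}); hence any two maximal forests of $G$ are linked by a sequence of maximal forests in which consecutive ones differ by one edge. Since shifting equivalence is transitive, it suffices to treat $T'=(T\setminus f)\cup g$ with $f\in T\setminus T'$ and $g\in T'\setminus T$.

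Let $C$ be the unique cycle in $T\cup g$. Because $T'$ is a forest, $f\in C$, and $C$ is likewise the unique cycle in $T'\cup f=T\cup g$. Applying the recipe from the proof of Proposition \ref{P:Normalizing} to $T'$-normalize $\vp_{\pi,T}$, the correction term is $\sum_{e\notin T'}\psi_{C(e),\vp_{\pi,T}(e)}$, where $C(e)$ is the fundamental cycle of $e$ in $T'\cup e$; since $E\setminus T'=((E\setminus T)\setminus\{g\})\cup\{f\}$ and $\vp_{\pi,T}$ vanishes off $T$, the only surviving summand is the one for $e=f$, with $C(f)=C$. Thus the $T'$-normalization of $\vp_{\pi,T}$ is $\vp_{\pi,T}-\psi$, where $\psi$ is the cycle-shift along $C$ carrying value $\vp_{\pi,T}(f)$ on $f$, and $\vp_{\pi,T}-\psi$ is shifting equivalent to $\vp_{\pi,T}$ by construction. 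It therefore remains to prove the identity $\vp_{\pi,T'}=\vp_{\pi,T}-\psi$. Both sides vanish off $T'$, so it is enough to compare values on each edge $e'\in T'$.

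For $e'\in T'\setminus C$ this is immediate: $\psi(e')=0$, and deleting $e'$ from $T$ and from $T'$ yields the same partition of the component of $G$ containing $e'$. Indeed, the unique $T$-path joining the ends of $g$ is $C\setminus\{g\}$, which avoids $e'$ and contains $f$; so in $T-e'$ the vertices of $C\setminus\{g\}$, in particular the ends of $f$ and of $g$, all lie on one side $S$ of the $e'$-cut, and passing to $T'-e'=(T-e')-f+g$ only rearranges edges within $S$. Hence $\vp_{\pi,T'}(e')=\vp_{\pi,T}(e')=\vp_{\pi,T}(e')-\psi(e')$ straight from the definition. For $e'\in C\cap T'$ (that is, $e'=g$, or $e'$ is a $T$-edge of $C$ other than $f$), deleting $e'$ from $T$ and from $T'$ produces genuinely different bonds $\delta(S)$ and $\delta(S')$, and the required equation becomes $\pi(S')=\pi(S)\mp\pi(S_f)$, where $S_f$ is the side of the $f$-cut of $T$ picked out by the orientation of $C$, and $\mp$ is the sign with which $\psi$ meets $e'$. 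This follows from a set identity expressing $S'$ in terms of $S$ and $S_f$ — obtained by traversing $C$ and recording how the two subtrees hanging off the ends of $e'$ are reattached when $f$ is traded for $g$ — together with additivity of $\pi$ on disjoint vertex sets.

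The step I expect to be the real work is that last one: pinning down the set $S'$, the side $S_f$, and the sign of $\psi$ on $e'$ consistently around the oriented cycle $C$, so that everything matches $\psi=\psi_{C,\vp_{\pi,T}(f)}$ exactly. One can also sidestep this bookkeeping: fix any orientation of the edges of $G$ and check, by a one-vertex-at-a-time computation using only $\pi(H)=0$ for each component $H$, that the boundary of $\vp_{\pi,T}$ — the map sending $v$ to the signed sum of $\vp_{\pi,T}$ over the edges at $v$ — equals $\pi$, independently of $T$; then $\vp_{\pi,T}-\vp_{\pi,T'}$ has zero boundary, so (by a short leaf-peeling argument as in the proof of Proposition \ref{P:Normalizing}) it is a sum $\sum_k\psi_{C_k,a_k}$ of cycle-shifts, whence $\vp_{\pi,T}$ and $\vp_{\pi,T'}$ are shifting equivalent. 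This alternative route also anticipates Theorem \ref{T:GainsFromLabelings}.
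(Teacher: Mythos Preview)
Your primary argument—reduce to a single exchange $T'=(T\setminus f)\cup g$ and exhibit $\vp_{\pi,T'}$ as $\vp_{\pi,T}$ shifted once along the fundamental cycle $C$ by the value $\vp_{\pi,T}(f)$—is exactly the paper's proof. For the step you flag as the real work (edges $e'\in C\cap T'$), the paper names the three components $S_1,S_2,S_3$ of $T\setminus\{f,e'\}$, draws the two possible placements of the new edge relative to $f$ and $e'$ in a figure, and reads off the identity $-\pi(S_1)-\pi(S_3)=-\pi(S_1\cup S_3)$ in the nontrivial configuration; your sketch is correct and only that picture and the one-line computation are missing.

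Your boundary-map alternative is a genuinely different and valid route. Rooting the relevant tree at $v$, the subtrees below the $T$-edges incident with $v$ partition $V(H)\setminus\{v\}$, and $\pi(H)=0$ gives $\partial\vp_{\pi,T}=\pi$ independently of $T$; then $\vp_{\pi,T}-\vp_{\pi,T'}$ has zero boundary, and after subtracting fundamental-cycle shifts to make it vanish off a maximal forest, the fundamental-bond argument of Proposition~\ref{P:Normalizing} forces it to vanish everywhere. This bypasses the case analysis entirely, and since summing $\partial\vp_{\pi,T}$ over one side of a bond recovers the bond sum, it yields Theorem~\ref{T:GainsFromLabelings} at the same stroke. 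The paper's version buys an explicit single shift tied to the elementary exchange and a picture; yours buys brevity and a direct bridge to the next theorem.
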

\begin{proof}
One shift operation can be performed in each connected component of $G$. Thus the result is true if and only if it is true for connected graphs, so we may assume that $G$ is connected. Consider the following well-known operation on spanning trees, which we will call \emph{edge exchange} in this proof. If $T$ is a spanning tree of $G$ and $e\notin T$, then for any edge $f\neq e$ on the unique cycle in $T\cup e$, $(T\bs f)\cup e$ is a spanning tree of $G$. It is well known that if $G$ is a connected graph and $T$ and $T'$ are spanning trees of $G$, then there is a sequence of spanning trees $T_1,\ldots,T_k\subseteq(T\cup T')$ such that $T=T_1$, $T'=T_k$, and $T_{i+1}$ is obtained from $T_i$ by an edge exchange. So to complete the proof it suffices to show that $\vp_{\pi,T_{i+1}}$ is obtained from $\vp_{\pi,T_i}$ by a single shift operation.

Say that $T_{i+1}=(T_i\bs f)\cup e$ and let $g\notin\{e,f\}$ be any other edge in $T_i$. Thus $T_i\bs\{f,g\}$ has exactly three connected components $S_1,S_2,S_3$ as shown in Figure \ref{F:VertexLabelingsToGains}. Orient edges $f$ and $g$ as indicated. There are two cases for the placement of $e$ relative to $f$ and $g$ as shown in the figure.

\begin{figure}[H]
\begin{center}
\includegraphics[page=5,scale=1]{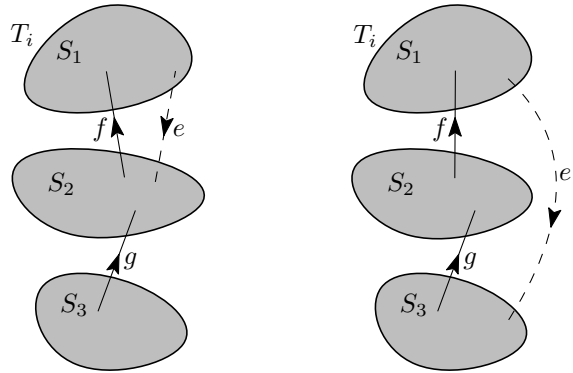}
\end{center}
\caption{Figure for the proof of Proposition \ref{P:GainsFromLabelings}.} \label{F:VertexLabelingsToGains}
\end{figure} Let $C$ be the unique cycle in $T_i\cup e$ oriented in the opposite direction to $e$ and $f$. Note that $g$ is in $C$ in the right configuration of Figure \ref{F:VertexLabelingsToGains} but not in the left configuration. Now let $a=\vp_{{\pi,T_i}}(f)$. We prove that $\vp_{{\pi,T_i}}+\psi_{{C,a}}=\vp_{{\pi,T_{i+1}}}$, which will satisfy our requirement. First, by definition, $(\vp_{{\pi,T_i}}+\psi_{{C,a}})(f)=0=\vp_{{\pi,T_{i+1}}}(f)$ and $(\vp_{{\pi,T_i}}+\psi_{{C,a}})(e)=-a=\vp_{{\pi,T_{i+1}}}(e)$. Second, for the configuration on the left of Figure \ref{F:VertexLabelingsToGains}, \[\vp_{{\pi,T_i}}(g)=(\vp_{{\pi,T_i}}+\psi_{{C,a}})(g)=-\pi(S_3)=\vp_{{\pi,T_{i+1}}}(g).\] Finally, for the configuration on the right, \[(\vp_{{\pi,T_i}}+\psi_{{C,a}})(g)=\vp_{{\pi,T_i}}(g)-a=-\pi(S_1)-\pi(S_3)=-\pi(S_1\cup S_3)=\vp_{{\pi,T_{i+1}}}(g).\] Since $g$ was chosen arbitrarily we have proven that $\vp_{{\pi,T_i}}+\psi_{{C,a}}=\vp_{{\pi,T_{i+1}}}$.
\end{proof}

\begin{proof}[Proof of Theorem \ref{T:GainsFromLabelings}]
Let $B$ be a bond of $G$. If $B$ intersects $T$ in one edge (i.e., $B=\exterior(\pi_{T\bs e})$) then $\pi(\vec B)=\vp_{\pi,T}(\vec B)$ by the definition of $\vp_{\pi,T}$. If $|B\cap T|\geq2$, then let $T'$ be any maximal forest for which $|B\cap T'|=1$. By Proposition \ref{P:GainsFromLabelings} $\vp_{\pi,T'}$ and $\vp_{\pi,T}$ are shifting equivalent, so $\pi(\vec B)=\vp_{\pi,T'}(\vec B)=\vp_{\pi,T}(\vec B)$, as required.
\end{proof}

Conversely, assume that $(G,\vp)$ is a $\Gamma$-gain graph. Assume that $G$ is completely split apart along cut vertices so that each connected component of $G$ is a block. This operation, of course, does not change the join matroids (Theorem \ref{T:CutVertices}). To simplify the discussion, assume that $G$ is loopless and has no isolated vertices. (Loops in $G$ are always loops in the join matroids and isolated vertices have no effect on the matroids.) Therefore, for each vertex $v$, $\delta(v)$ is a bond of $G$. Let $\vec B_v$ be the bond $\delta(v)$ oriented towards $v$. Define $\pi_\vp(v)=\vp(\vec B_v)$.

\begin{theorem} \label{T:GainsToLabelings}
If $(G,\vp)$ is a $\Gamma$-gain graph in which each connected component is a block, then $\pi_\vp$ is a quotient labeling and for each oriented bond $\vec B$ in $G$, $\vp(\vec B)=\mc L_{\pi_\vp}(\vec B)$.
\end{theorem}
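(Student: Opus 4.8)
The plan is to derive both assertions of Theorem~\ref{T:GainsToLabelings} from one counting identity: for every vertex subset $X$ contained in a single connected component of $G$,
\[
\sum_{v\in X}\vp(\vec B_v)=\vp(\vec B_X),
\]
where $\vec B_X$ denotes the coboundary $\delta(X)$ oriented toward $X$, and the right-hand side is read as $0$ when $\delta(X)=\emptyset$. Once this identity is available, the theorem follows by two specializations of $X$, so almost all of the work is in proving the identity, and that is a routine edge-by-edge bookkeeping argument.

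To prove the identity I would expand the left-hand side as a sum over oriented edges. Each edge $e=uw$ with \emph{both} endpoints in $X$ occurs exactly twice among the bonds $\vec B_v$ with $v\in X$: once inside $\vec B_u$ oriented toward $u$ and once inside $\vec B_w$ oriented toward $w$. These two occurrences are reverses of one another, so by $\vp(-e)=-\vp(e)$ their contributions cancel. Each edge $e$ with \emph{exactly one} endpoint $u\in X$ occurs exactly once, namely in $\vec B_u$, oriented toward $u$, which is precisely its orientation in $\vec B_X$. Looplessness is what makes this case split exhaustive (no edge has a single endpoint counted twice); the hypotheses that each component is a block and that $G$ has no isolated vertices are needed only so that each $\delta(v)$ really is a bond, matching the definition of $\pi_\vp$ in the statement, but they play no role in the algebra. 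Collecting the surviving terms gives exactly $\sum_{e\in\delta(X)}\vp(e\text{ oriented toward }X)=\vp(\vec B_X)$.

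With the identity in hand the two conclusions are immediate. Applying it with $X=V(H)$ for a connected component $H$ gives $\delta(X)=\emptyset$, hence $\pi_\vp(H)=\sum_{v\in V(H)}\vp(\vec B_v)=0$, so $\pi_\vp$ is a $\Gamma$-quotient labeling. Applying it with $X$ the distinguished side of a bond, so that $\vec B=\vec B_X$, gives $\pi_\vp(\vec B)=\pi_\vp(X)=\sum_{v\in X}\vp(\vec B_v)=\vp(\vec B)$, which is the stated equality of gain values on oriented bonds; in particular $\mc L_\vp=\mc L_{\pi_\vp}$. I do not anticipate a genuine obstacle here: the only thing demanding care is keeping the orientation conventions consistent (every coboundary $\delta(\cdot)$ oriented toward its distinguished side) and confirming that the internal-edge cancellation is exhaustive, which is exactly the point at which looplessness is used.
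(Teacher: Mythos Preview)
Your proof is correct and is essentially the same as the paper's: both arguments expand $\sum_{v\in X}\vp(\vec B_v)$ as a sum over oriented edges, observe that edges internal to $X$ cancel in pairs while edges leaving $X$ survive with the orientation toward $X$, and then specialize to $X=V(H)$ and to $X$ the side of a bond. The only cosmetic difference is that you state the counting identity once and specialize twice, whereas the paper writes out the two specializations separately.
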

\begin{proof}
For each connected component $H$ of $G$, \[\sum_{v\in V(H)}\pi_\vp(v)=\sum_{v\in V(H)}\vp(\vec B_v)=\sum_{e\in\vec E(H)}\vp(e)=0.\] Thus $\pi_\vp$ is a quotient labeling. In a similar fashion, if $\vec B$ is the orientation of bond $\delta(X)$ directed towards $v$, then \[\sum_{v\in X}\pi_\vp(v)=\sum_{v\in X}\vp(\vec B_v)=\sum_{e\in\vec B}\vp(e)=\vp(\vec B),\] as required.
\end{proof}

\begin{example}[Nonseparating bonds]\label{X:Nonseparating}
We generalize the examples of Section \ref{S:TwoPointCobias} to an arbitrary subset $W$ of $V(G)$ by defining $\mc L_{W}$ as the set of bonds $\delta(X)$ that do not separate $W$; i.e., $W \subseteq X$ or $W \subseteq V(G)\bs X$.  It is easy to verify that this set is a linear class, but it is not additive (in the sense of Section \ref{S:TwoPointCobias}) if $|W|>2$ since it is possible for every part of the tripartition of a tribond to contain a vertex of $W$.

This linear class exemplifies $\Gamma$-quotient labeling with the group $\Gamma = \mathbf{Z}_{|W|}$.  The label of a vertex is $0$ is $\pi(v)=0$ if $v \notin W$ and $\pi(v) = 1$ if $v \in W$.  Since $\pi(\delta(X)) \equiv |W \cap X| \mod |W|$ for any $X \subseteq V(G)$, only nonseparating bonds are cobalanced relative to $\pi$.

We interpret Example \ref{X:NonseparatingW} in a similar way.  Using the same group $\mathbb{Z}_2$, assign vertex values as in the previous example.
\end{example}

\subsection{Deletions and contractions for gains} \label{S:DeletionsContractionsGains}

Let $\vp$ be a $\Gamma$-gain function. The set $\mc L_\vp/e$ is defined in Section \ref{S:DeletionsContractions1} as the set of bonds in $\mc L_\vp$ that do not contain $e$ and it is shown that $\mc L_\vp/e$ is a linear class of bonds of $G/e$. Let $\vp/e$ be the $\Gamma$-gain function defined on $G/e$ by restriction of $\vp$ to $E(G/e)=E(G)\bs e$. Proposition \ref{P:GainContraction} is immediate.

\begin{proposition} \label{P:GainContraction}
If $\vp$ is a $\Gamma$-gain function and $e$ is an edge of $G$, then $\mc L_\vp/e=\mc L_{\vp/e}$.
\end{proposition}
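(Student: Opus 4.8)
The plan is to prove the equality of linear classes bond by bond, which is straightforward because the gain of an oriented bond is simply the sum of the gains of its edges. Recall from Section~\ref{S:DeletionsContractions1} that the bonds of $G/e$ are exactly the bonds $B$ of $G$ with $e\notin B$, and that $\mc L_\vp/e$ is the set of those such $B$ that lie in $\mc L_\vp$. Hence $\mc L_\vp/e$ and $\mc L_{\vp/e}$ have the same ground set, namely the set of bonds of $G/e$, and it remains only to show that a bond $B$ of $G/e$ is cobalanced in $(G,\vp)$ if and only if it is cobalanced in $(G/e,\vp/e)$. We may also assume $e$ is a link, since contracting a loop merely deletes it, no bond contains it, and $\vp$ is unchanged on the remaining edges.

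First I would fix a bond $B=\delta_G(X)$ of $G$ with $e\notin B$ and note that the two endpoints of $e$ lie on the same side of $B$; thus $B=\delta_{G/e}(X')$, where $X'$ is the image of $X$ under contraction, and for either choice of orientation the oriented bond $\vec B$ consists of the very same oriented edges whether read in $G$ or in $G/e$. Then I would compute $\vp(\vec B)=\sum_{f\in\vec B}\vp(f)$ and observe that, since $\vp/e$ is by definition the restriction of $\vp$ to $E(G)\bs e$ and no edge of $B$ equals $e$, we have $\vp(f)=(\vp/e)(f)$ for every $f\in\vec B$, so $\vp(\vec B)=(\vp/e)(\vec B)$. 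Therefore $\vp(\vec B)=0$ if and only if $(\vp/e)(\vec B)=0$, which is exactly the claimed equivalence of cobalance.

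I expect no real obstacle here; the only point requiring care is matching the orientation conventions across the contraction, which is handled by the observation that both endpoints of $e$ stay on one side of $B$. Combining the bond-by-bond equivalence with the description of $\mc L_\vp/e$ recalled above yields $\mc L_\vp/e=\mc L_{\vp/e}$.
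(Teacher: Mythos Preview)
Your proof is correct; the paper itself gives no argument beyond declaring the proposition ``immediate,'' and what you have written is precisely the natural unpacking of that immediacy. The only minor point is that your aside about loops is unnecessary in this paper's setup (contraction is discussed for links), but it does no harm.
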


The set $\mc L_\vp\bs e$ is defined in Section \ref{S:DeletionsContractions1} as the set of bonds $B$ in $G\bs e$ for which either $B$ or $B\cup e$ is a bond in $\mc L_\vp$. As long as $e$ is not an isthmus of $G$, there is a $\Gamma$-gain function $\psi$ on $G$ that is shifting equivalent to $\vp$ and for which $\psi(e)=0$. (See Proposition \ref{P:Normalizing}.) Define $\psi\bs e$ to be the $\Gamma$-gain function on $G\bs e$ defined by restriction of $\psi$ to $E(G\bs e)=E(G)\bs e$.

\begin{proposition} \label{P:GainDeletion}
If $\vp$ is a $\Gamma$-gain function and $e$ is a non-isthmus edge of $G$, then there is $\psi$ that is shifting equivalent to $\vp$ such that $\psi(e)=0$ and $\mc L_\vp\bs e=\mc L_\psi\bs e=\mc L_{\psi\bs e}$.
\end{proposition}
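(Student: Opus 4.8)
The plan is to produce the gain function $\psi$ and then establish the two equalities $\mc L_\vp\bs e=\mc L_\psi\bs e$ and $\mc L_\psi\bs e=\mc L_{\psi\bs e}$ separately; the second carries the content. The existence of $\psi$ is essentially already recorded in the paragraph preceding the statement: since $e$ is not an isthmus, $G$ has a maximal forest $T$ with $e\notin T$, and Proposition \ref{P:Normalizing} supplies the $T$-normalized $\Gamma$-gain function $\vp_T$ shifting equivalent to $\vp$; because $e\notin T$ we have $\vp_T(e)=0$, so we may take $\psi=\vp_T$ (equivalently, shift $\vp$ by $\psi_{C,-\vp(e)}$ along a cycle $C$ through $e$).

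For $\mc L_\vp\bs e=\mc L_\psi\bs e$: shifting a gain function changes no bond's value (this is exactly the identity $\vp(\vec B)=(\vp+\psi_{C,a})(\vec B)$ from the cycle-shifting subsection, iterated along a sequence of shifts), so $\mc L_\vp=\mc L_\psi$; and the operation $\mc L\mapsto\mc L\bs e$ of Section \ref{S:DeletionsContractions1} depends only on $\mc L$. Hence the two sides are literally the same set.

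The main step is $\mc L_\psi\bs e=\mc L_{\psi\bs e}$, which I would prove by comparing membership bond by bond. Fix a bond $B$ of $G\bs e$. By the dichotomy recalled in Section \ref{S:DeletionsContractions1}, exactly one of $B$ and $B\cup e$ is a bond of $G$; write that bond as $\delta_G(X)$ and orient it with all edges directed away from $X$, which restricts to the orientation $\vec B$ of $B=\delta_{G\bs e}(X)$ with all edges directed away from $X$. If $B$ itself is the bond of $G$, then $\psi$ and $\psi\bs e$ agree on every edge of $B$, so the value $\psi(\vec B)$ computed in $G$ equals $(\psi\bs e)(\vec B)$; since $B\cup e$ is then not a bond of $G$, membership $B\in\mc L_\psi\bs e$ just means $B\in\mc L_\psi$, so the two conditions coincide. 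If instead $B\cup e$ is the bond of $G$, orienting it as above gives an orientation $\vec D$ of $B\cup e$ that restricts to $\vec B$ on $B$, and $\psi(\vec D)=\psi(\vec e)+(\psi\bs e)(\vec B)$ for the induced orientation $\vec e$ of $e$; here $\psi(e)=0$, so $\psi(\vec e)=0$ regardless of the orientation of $e$, whence $\psi(\vec D)=(\psi\bs e)(\vec B)$, and in this case $B\in\mc L_\psi\bs e$ means $B\cup e\in\mc L_\psi$, i.e.\ $\psi(\vec D)=0$. In both cases $B\in\mc L_\psi\bs e\iff(\psi\bs e)(\vec B)=0\iff B\in\mc L_{\psi\bs e}$.

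The only delicate point is the orientation bookkeeping — checking that the chosen orientation of the bond of $G$ restricts to a legitimate orientation of $B$ in $G\bs e$ — together with the small but essential observation that killing the gain on $e$ is precisely what lets us discard the $e$-term from the bond sum. Beyond that I foresee no real obstacle.
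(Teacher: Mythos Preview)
Your proof is correct and follows essentially the same approach as the paper: obtain $\psi$ via Proposition~\ref{P:Normalizing}, then compare bond values using that $\psi(e)=0$ kills the $e$-term when passing between $B$ and $B\cup e$. The only cosmetic difference is that you factor the argument through the intermediate equality $\mc L_\vp\bs e=\mc L_\psi\bs e$ (immediate from $\mc L_\vp=\mc L_\psi$), whereas the paper proves $\mc L_{\psi\bs e}=\mc L_\vp\bs e$ in one pass by invoking $\vp(\vec B_e)=\psi(\vec B_e)$ directly.
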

\begin{proof}
The existence of $\psi$ is implied by Proposition \ref{P:Normalizing}. Now, if $B\in\mc L_{\psi\bs e}$, then $(\psi\bs e)(\vec B)=0$. Since $B$ is a bond of $G\bs e$, there is a bond $B_e\in\{B,B\cup e\}$ of $G$. Since $\psi(e)=0$, we get $\vp(\vec B_e)=\psi(\vec B_e)=0$. This implies that $B_e\in\mc L_\vp$, which implies that $B\in\mc L_\vp\bs e$. Conversely, if $B\in\mc L_\vp\bs e$, there is a bond $B_e\in\{B,B\cup e\}$ of $\mc L_\vp$ for which $\psi(\vec B_e)=\vp(\vec B_e)=0$. This implies that $(\psi\bs e)(\vec B)=0$, which makes $B\in\mc L_{\psi\bs e}$.
\end{proof}

\subsection{Deletions and contractions for quotient labelings} \label{S:DeletionsContractionsQuotients}

Let $\pi$ be a $\Gamma$-quotient labeling of $G$ and let $e$ be a link in $G$ with endpoints $u$ and $v$. Let $w$ be the vertex obtained by the contraction of $e$ in $G$. Define $\pi/e$ to be the labeling on $V(G/e)$ given by $(\pi/e)(x)=\pi(x)$ when $x\in V(G)\cap V(G/e)$ and $(\pi/e)(w)=\pi(u)+\pi(v)$.

\begin{proposition} \label{P:QuotientLabelingContraction}
If $\pi$ is a $\Gamma$-quotient labeling of $G$ and $e$ is a link in $G$, then $\pi/e$ is a $\Gamma$-quotient labeling of $G/e$ and $\mc L_{\pi}/e=\mc L_{\pi/e}$.
\end{proposition}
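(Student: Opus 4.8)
The plan is to check the two assertions separately, each by a direct computation from the definitions. The first assertion, that $\pi/e$ is a $\Gamma$-quotient labeling of $G/e$, follows from the observation that contracting the link $e=uv$ merges $u$ and $v$ (which already lie in one component of $G$) into a single vertex $w$ and leaves the component structure otherwise unchanged. So if $H'$ is a component of $G/e$ not containing $w$, it equals a component $H$ of $G$ and $(\pi/e)(H')=\pi(H)=0$; and if $H'$ is the component containing $w$, then $(\pi/e)(H')=(\pi/e)(w)+\sum_{x\in V(H')\bs\{w\}}\pi(x)=\pi(u)+\pi(v)+\sum_{x\in V(H)\bs\{u,v\}}\pi(x)=\pi(H)=0$, where $H$ is the corresponding component of $G$. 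Hence $\pi/e$ is a quotient labeling.

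For the equality $\mc L_\pi/e=\mc L_{\pi/e}$, recall from Section \ref{S:DeletionsContractions1} that the bonds of $G/e$ are exactly the bonds of $G$ that do not contain $e$, and that $\mc L_\pi/e$ is the set of those bonds that lie in $\mc L_\pi$. So it suffices to show, for a bond $B$ of $G/e$ (equivalently, a bond $B=\delta_G(X)$ of $G$ with $e\notin B$), that $B$ is cobalanced with respect to $\pi$ if and only if it is cobalanced with respect to $\pi/e$. Since $e\notin B$, the endpoints $u,v$ lie on the same side of the cut, and after possibly exchanging $X$ with the rest of its component I may assume either $\{u,v\}\subseteq X$ or $\{u,v\}\cap X=\varnothing$. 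In the first case $B=\delta_{G/e}(X')$ with $X'=(X\bs\{u,v\})\cup\{w\}$, and $(\pi/e)(\vec B)=(\pi/e)(X')=(\pi/e)(w)+\sum_{x\in X\bs\{u,v\}}\pi(x)=\pi(u)+\pi(v)+\sum_{x\in X\bs\{u,v\}}\pi(x)=\pi(X)=\pi(\vec B)$. In the second case $B=\delta_{G/e}(X)$ with the same $X$ and $w\notin X$, so $(\pi/e)(\vec B)=\pi(X)=\pi(\vec B)$ directly. In both cases $\pi(\vec B)=(\pi/e)(\vec B)$, so the two notions of cobalance agree on $B$, giving $\mc L_\pi/e=\mc L_{\pi/e}$.

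I do not expect a genuine obstacle here; the only point that needs a moment's care is the choice of which side $X$ of the bond to work with (and hence which orientation $\vec B$ one picks), but because $\pi$ is a quotient labeling we have $\pi(X)=-\pi(V(H)\bs X)$ for the ambient component $H$, so the condition $\pi(\vec B)=0$ — and thus cobalance — is independent of that choice. This lets the case analysis above be carried out with either orientation.
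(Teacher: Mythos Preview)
Your proof is correct and follows the same direct-verification approach as the paper: both arguments identify the bonds of $G/e$ with the bonds of $G$ avoiding $e$ and check that cobalance with respect to $\pi$ and $\pi/e$ coincide on them. Your write-up is more detailed than the paper's one-line chain of biconditionals---in particular you explicitly verify that $\pi/e$ is a quotient labeling, which the paper leaves implicit---but the underlying idea is identical.
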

\begin{proof}
A bond $B$ of $G/e$ is in $\mc L_{\pi}/e$ if and only if $B$ is a bond of $G$ not containing $e$ and $B\in\mc L_\pi$ if and only if both endpoints of $e$ are in $X$ or both endpoints of $e$ are not in $X$ where  $B=\delta(X)$ if and only if $B$ is a bond of $G/e$ in $\mc L_{\pi/e}$.
\end{proof}

For Proposition \ref{P:QuotientLabelingDeletion}, we define the vertex labeling $\pi\bs e$ on $G\bs e$ by $\pi\bs e=\pi$.

\begin{proposition} \label{P:QuotientLabelingDeletion}
If $\pi$ is a $\Gamma$-quotient labeling of $G$ and $e$ is a link in $G$, then $\pi\bs e$ is a $\Gamma$-quotient labeling of $G\bs e$ if and only if $e$ is not an un-cobalanced isthmus of $(G,\mc L_\pi)$. Furthermore, if $e$ is not an un-cobalanced isthmus of $(G,\mc L_\pi)$, then $\mc L_{\pi}\bs e=\mc L_{\pi\bs e}$.
\end{proposition}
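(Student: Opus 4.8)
The plan is to analyze $G$ and $G\bs e$ component by component. They have the same vertex set and $\pi\bs e=\pi$ as a function on $V(G)$, so the only change is in the partition of $V(G)$ into connected components: if $e$ is not an isthmus the components are unchanged, while if $e$ is an isthmus the component $H$ of $G$ containing $e$ splits into two components $H_1,H_2$ of $G\bs e$ with $V(H)=V(H_1)\cup V(H_2)$ (a disjoint union), every other component being unchanged. Note also that when $e$ is an isthmus the only bond of $G$ containing $e$ is $\{e\}=\delta_G(V(H_1))$, so ``$e$ is an un-cobalanced isthmus of $(G,\mc L_\pi)$'' means exactly that $e$ is an isthmus and $\pi(H_1)\ne0$.

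For the biconditional, first suppose $e$ is not an isthmus. Then every component of $G\bs e$ is a component of $G$, so $(\pi\bs e)(H')=\pi(H')=0$ for each, hence $\pi\bs e$ is a $\Gamma$-quotient labeling; and $e$ is trivially not an un-cobalanced isthmus, so both sides of the biconditional hold. Now suppose $e$ is an isthmus. Since $\pi(H_1)+\pi(H_2)=\pi(H)=0$, the labeling $\pi\bs e$ can fail the quotient-labeling condition only on the components $H_1$ and $H_2$, and it satisfies it precisely when $\pi(H_1)=0$, i.e.\ precisely when $e$ is not an un-cobalanced isthmus. This proves the biconditional.

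For the ``furthermore,'' assume $e$ is not an un-cobalanced isthmus, so by the biconditional $\pi\bs e$ is a quotient labeling and $\mc L_{\pi\bs e}$ is defined; in the isthmus case we also have $\pi(H_1)=\pi(H_2)=0$. Both $\mc L_\pi\bs e$ and $\mc L_{\pi\bs e}$ are subsets of the bond set of $G\bs e$, so it suffices to show that a bond $B$ of $G\bs e$ lies in one exactly when it lies in the other. Write $B=\delta_{G\bs e}(X)$ with $X$ contained in a single component of $G\bs e$; since $\pi\bs e$ is a quotient labeling, $B\in\mc L_{\pi\bs e}$ iff $\pi(X)=0$. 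On the other hand, by the fact recalled in Section~\ref{S:DeletionsContractions1} exactly one of $B$ and $B\cup e$ is a bond of $G$, and $B\in\mc L_\pi\bs e$ iff that bond, say $\delta_G(Y)$, lies in $\mc L_\pi$, i.e.\ iff $\pi(Y)=0$. I would then check, by a short case analysis on whether $e$ is an isthmus and on which side of $B$ the endpoints of $e$ fall, that $Y$ can always be chosen so that $\pi(Y)=\pi(X)$ or $\pi(Y)=-\pi(X)$; in every case $\pi(Y)=0\iff\pi(X)=0$, giving $\mc L_\pi\bs e=\mc L_{\pi\bs e}$.

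The one case that is not immediate is when $e$ is an isthmus and an endpoint of $e$ lies in $X$: there $B\cup e$ is not a bond of $G$ (an isthmus lies in no bond other than $\{e\}$), so $B$ itself is the bond of $G$, but its sides in $G$ differ from its sides in $G\bs e$ — one side is $V(H_1)\bs X$, so $\pi(Y)=\pi(H_1)-\pi(X)=-\pi(X)$. This is exactly where the hypothesis $\pi(H_1)=0$ (that $e$ is cobalanced) is used; everything else is routine bookkeeping of how bonds, their sides, and the values $\pi(X)$ behave under deletion of one edge, parallel to Propositions~\ref{P:QuotientLabelingContraction} and~\ref{P:GainDeletion}.
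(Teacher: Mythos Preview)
Your proof is correct and follows the same approach as the paper's, just with considerably more detail filled in. The paper's own proof dispatches the biconditional as ``obvious'' and handles the second statement with a one-line chain of equivalences $B\in\mc L_\pi\bs e \iff B_e\in\mc L_\pi \iff B\in\mc L_{\pi\bs e}$, whereas you explicitly unpack the component analysis for the first part and the case analysis on the position of $e$'s endpoints for the second; in particular you correctly isolate the one nontrivial case (isthmus with an endpoint in $X$) where the hypothesis $\pi(H_1)=0$ is actually used.
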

\begin{proof}
The first statement is obvious. Now if $e$ is not an un-cobalanced isthmus, consider a bond $B$ in $G\bs e$ and let $B_e\in\{B,B\cup e\}$ be the corresponding bond in $G$. Now $B\in\mc L_{\pi}\bs e$ if and only $B_e\in\mc L_\pi$ if and only if $B\in \mc L_{\pi\bs e}$.
\end{proof}

\subsection{Gains and quotient labelings using fields}

Let $\bb F$ be any field. Denote the additive group of $\bb F$ by $\bb F^+$. Scalar multiplication of $\bb F^+$-gain functions and quotient labelings using a nonzero element of $\bb F$ does not affect the resulting linear class of bonds. The proof of Proposition \ref{P:ScalarMultiplication} is evident. Readers who are familiar with partial fields will note that this operation generalizes immediately to partial fields.

\begin{proposition} \label{P:ScalarMultiplication}
If $(G,\vp)$ is an $\bb F^+$-gain graph, $\pi$ an $\bb F^+$-quotient labeling, and $a$ a nonzero element of $\bb F$, then
\begin{itemize}
\item[(1)] $\mc L_\vp=\mc L_{a\vp}$ and
\item[(2)] $a\pi$ is a quotient labeling of $G$ with $\mc L_{a\pi}=\mc L_\pi$.
\end{itemize}
\end{proposition}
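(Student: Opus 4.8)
The plan is to prove the two parts by the same elementary mechanism: the value of an oriented bond (resp.\ the label of a part of a partition) is linear in the gain function $\vp$ (resp.\ in the labeling $\pi$), so multiplying by $a$ multiplies each such value by $a$; and because $\bb F$ has no zero divisors and $a\neq 0$, a value becomes zero after scaling if and only if it was zero before. The only preliminary checks are that $a\vp$ is again an $\bb F^+$-gain function and that $a\pi$ is again an $\bb F^+$-quotient labeling.

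For Part (1), first note that $a\vp$ is an $\bb F^+$-gain function, since $(a\vp)(-e)=a\bigl(\vp(-e)\bigr)=a\bigl(-\vp(e)\bigr)=-(a\vp)(e)$. Now let $B$ be a bond with a chosen orientation $\vec B$. By distributivity in $\bb F$,
\[
(a\vp)(\vec B)=\sum_{e\in\vec B}(a\vp)(e)=a\sum_{e\in\vec B}\vp(e)=a\,\vp(\vec B).
\]
Since $a\neq 0$ in the field $\bb F$, we have $a\,\vp(\vec B)=0$ if and only if $\vp(\vec B)=0$; that is, $B$ is cobalanced with respect to $a\vp$ exactly when it is cobalanced with respect to $\vp$. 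Hence $\mc L_{a\vp}=\mc L_\vp$.

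For Part (2), first check that $a\pi$ is an $\bb F^+$-quotient labeling: for each connected component $H$ of $G$,
\[
(a\pi)(H)=\sum_{v\in V(H)}a\,\pi(v)=a\sum_{v\in V(H)}\pi(v)=a\cdot 0=0.
\]
Next, given a bond $B=\delta(X)$ with orientation $\vec B$ towards $X$, we have $(a\pi)(\vec B)=(a\pi)(X)=a\,\pi(X)=a\,\pi(\vec B)$, which vanishes precisely when $\pi(\vec B)=0$ because $a\neq 0$. Therefore $\mc L_{a\pi}=\mc L_\pi$.

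I do not expect a genuine obstacle here; the single place where the field structure is used is the absence of zero divisors, which is exactly the property of a partial field needed for the argument, accounting for the remark that the statement transfers verbatim to partial fields.
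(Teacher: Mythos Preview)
Your proof is correct and is exactly the direct verification the paper has in mind; the paper itself omits the argument, simply declaring the result ``evident.'' The single substantive point---that $a\neq 0$ in a field (or partial field) means $a\cdot x=0\iff x=0$---is the one you identify and use.
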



\end{document}